\numberwithin{equation}{section}
\newtheorem{theorem}{Theorem}[section]
\newtheorem{lemma}[theorem]{Lemma}
\begin{document}

\title[First order differential equation]{On the modulus of solutions of a first order differential equation}

\author{Yueyang Zhang}
\address{School of Mathematics and Physics, University of Science and Technology Beijing, No.~30 Xueyuan Road, Haidian, Beijing, 100083, People's Republic of China}
\email{zhangyueyang@ustb.edu.cn}
\thanks{The author is supported by a Project supported by the National Natural Science Foundation of China~{(12301091)} and the Ministry of Science and Technology of the People's Republic of China~{(G2021105019L)}. The author thanks professor Rod Halburd of the University College London for having a lot of valuable discussions on Hayman's equation.}

%\subjclass[2020]{Primary 34M10; Secondary 0D35}
\subjclass[2010]{Primary 34M10; Secondary 30D15, 30D35}

\keywords{First order differential equation, Meromorphic solutions, Modulus, Br\"{u}ck's conjecture, Hayman's equation}

\date{\today}

\commby{}

\begin{abstract}
Let $P(z)=z^{n}+a_{n-2}z^{n-2}+\cdots+a_0$ be a nonconstant polynomial and $S(z)$ be a nonzero rational function and denote $h(z)=S(z)e^{P(z)}$. Let $\theta\in(0,\pi/2n)$ be a constant and $\varepsilon>0$ be a small constant. It is shown that if $f(z)$ is a solution of the first order differential equation $f'(z)=h(z)f(z)+1$, then there is a sequence $\{r_{k}\}$ such that the set $E=\cup_{l=0}^{\infty}[r_{2l},r_{2l+1}]$ has infinite logarithmic measure and for all $r\in E$,
\begin{equation}\tag{\dag}
\begin{split}
|f(re^{i\theta})|\geq (1-\varepsilon)\frac{\sqrt[n]{\sin n\theta}}{n}r\exp\left(e^{(1-\varepsilon)r^n\cos n\theta}\sin\varepsilon\right).
\end{split}
\end{equation}
When $h(z)=e^{z}$, we also give a lower bound for $|f(re^{i\theta})|$ for other values of $r$. The estimate in $(\dag)$ yields that the hyper-order $\varsigma(f)$ of $f(z)$ is equal to $n$, giving a partial answer to Br\"{u}ck's conjecture in uniqueness theory of meromorphic functions. An extension of the method also yields a complete description on the order of growth of entire solutions of a second order algebraic differential equation of Hayman in the autonomous case.

\end{abstract}

\maketitle

%\newpage

\section{Introduction}\label{Introduction}

Let $P(z)$ be a polynomial and $S(z)$ be a nonzero rational function in the finite complex plane $\mathbb{C}$. Denote $h(z)=S(z)e^{P(z)}$. In this paper, we are concerned with the modulus of the solutions of the first order differential equation
\begin{equation}\label{Eq 1}
\begin{split}
f'(z)=h(z)f(z)+1,
\end{split}
\end{equation}
which appears in the literature as a modification of the first order differential equation
\begin{equation}\label{classNequatoin1}
\begin{split}
g'(z)=h(z)\left(g(z)-z\right).
\end{split}
\end{equation}
Equation \eqref{classNequatoin1} becomes equation \eqref{Eq 1} if we let $f(z)=z-g(z)$. Equation \eqref{classNequatoin1} arises when applying Newton's method, namely $g(z)=z-H(z)/H'(z)$, to find zeros of entire functions like $H(z)=\int_{0}^{z}\exp(\int_{0}^{t}P_0(s)e^{P(s)}ds)dt$, where $P_0(z)$ is a polynomial. The set of all finite order meromorphic functions satisfying equations of the form in \eqref{classNequatoin1} is the so-called \emph{class N} in complex dynamics~\cite{Baker1984,Bergweiler1992,Bergweiler1993}. In this case, $P(z)$ is a constant~\cite{zhang2021-1}. Equation \eqref{Eq 1} also appears in uniqueness theory of meromorphic functions~\cite{Bruck1996,GundersenYang1998,YangYi2003} and can be a particular reduction of a second order algebraic differential equation of Hayman~\cite{Hayman1996,ChiangHalburd2003,HalburdWang,zhang2017,Zhang2024}. By analyzing the modulus of the solution $f(z)$ of equation \eqref{Eq 1}, we are able to provide answers to related problems in these two areas.

From now on we assume that $P(z)$ is nonconstant and has degree $n\geq 1$. By re-scaling and translating the variable $z$ suitably, we shall assume that $P(z)$ takes the form
\begin{equation}\label{Eq 41}
\begin{split}
P(z)=z^{n}+a_{n-2}z^{n-2}+\cdots+a_0,
\end{split}
\end{equation}
where $a_{n-2},\cdots, a_0$ are constants. When $n=1$, this means $P(z)=z$. We also assume that $S(z)=nz^{m}[1+O(z^{-1})]$ as $z\to\infty$ for some integer $m\in \mathbb{Z}$. All poles of $S(z)$ are located in a finite open disk $D(0,\dot{r})=\{z=x+iy:|z|<\dot{r}\}$ for some $\dot{r}>0$. Then the solution $f(z)$ of equation \eqref{Eq 1} is analytic outside $D(0,\dot{r})$. In particular, if $S(z)$ is a polynomial, then the general solutions of equation \eqref{Eq 1} are entire functions.

Throughout the paper, $\varepsilon>0$ denotes a small constant. Moreover, the notation $O(1)$ means the related quantity is bounded while the notation $o(1)$ means the related quantity is infinitely small respectively. The main purpose of this paper is to provide a lower bound for $|f(z)|$ along certain rays. We first prove the following

\begin{theorem}\label{maintheorem1}
Let $P(z)$ be a polynomial of the form in \eqref{Eq 41} and $S(z)$ be a nonzero rational function. Let $\theta\in(0,\pi/2n)$ be a constant. If $f(z)$ is a solution of equation \eqref{Eq 1}, then there is a sequence $\{r_{k}\}$ such that the set $E=\cup_{l=0}^{\infty}[r_{2l},r_{2l+1}]$ has infinite logarithmic measure and for all $r\in E$,
\begin{equation}\label{Eq 3}
\begin{split}
|f(re^{i\theta})|\geq (1-\varepsilon)\frac{\sqrt[n]{\sin n\theta}}{n}r\exp\left(e^{(1-\varepsilon)r^n\cos n\theta}\sin\varepsilon\right).
\end{split}
\end{equation}
\end{theorem}

\vspace*{-10pt}%
\subsection{The Strategy of the Proof}

The proof of Theorem~\ref{maintheorem1}, which will be given in Section~\ref{Proof-1}, follows from an observation on the calculation of Nevanlinna's characteristic $T(r,f)$ of the entire function $e^{e^z}$ in~\cite[pp.~84--85]{Hayman1964}. See also there for the standard notation and basic results of Nevanlinna theory. It is shown there, if $\phi(s)$ is a bounded, non-negative and even function for real $s$ and that
\begin{equation}\label{Eq 3 a5}
\begin{split}
\frac{1}{s}\int_{0}^s\phi(t)dt=l[1+o(1)], \quad s\to\infty,
\end{split}
\end{equation}
then
\begin{equation}\label{Eq 3 a6}
\begin{split}
I(r)=\frac{1}{2\pi}\int_{-\pi}^{\pi}e^{r\cos \theta}\phi(r\sin \theta)d\theta=\frac{[l+o(1)]e^{r}}{(2\pi r)^{1/2}}, \quad r\to\infty.
\end{split}
\end{equation}
If we choose $\phi(s)$ in \eqref{Eq 3 a5} to be $\phi_1(s)=\max\{\cos s,0\}$ or $\phi_2(s)=\max\{-\cos s,0\}$, then the estimate in \eqref{Eq 3 a6} holds for both $\phi_1(s)$ and $\phi_2(s)$. Denote the corresponding integrals by $I_1(r)$ and $I_2(r)$, respectively. Since $e^{e^z}$ has no zeros, by the first main theorem of Nevanlinna we have $\mathbf{I}(r)=I_1(r)-I_2(r)=m(r,e^{e^z})-m(r,e^{-e^z})=\log|f(0)|=1$. Basically, for any real constant $\omega$, by the Cauchy integral formula we have $\oint_{|z|=1}e^{-\omega z}z^{-1}dz=2\pi i$, giving the two identities
\begin{equation}\label{Eq 70-cor4}
\begin{split}
\int_{0}^{2\pi}e^{-\omega\cos\theta}\cos(\omega\sin\theta)d\theta&=2\pi,\\
\int_{0}^{2\pi}e^{-\omega\cos\theta}\sin(\omega\sin\theta)d\theta&=0.
\end{split}
\end{equation}
In general, if the integration interval in \eqref{Eq 3 a6} is not $[-\pi,\pi]$, then the corresponding $\mathbf{I}(r)$ may not be bounded as $r\to\infty$. To provide a lower bound for $|f(z)|$, we will estimate such kind of $\mathbf{I}(r)$ with integration interval being a subinterval of $[0,\pi/2]$ in the proof of Theorem~\ref{maintheorem1}, as well as in the proof of Theorem~\ref{maintheorem2}.

Here we look at the simplest case of equation \eqref{Eq 1}, that is, the case when $h(z)=e^z$. In this case, by integration we may write the solution $f(z)$ in \eqref{Eq 1} as
\begin{equation}\label{Eq 4}
\begin{split}
f(z)=e^{e^z}\left(c+\int_{z_0}^{z}e^{-e^t}dt\right),
\end{split}
\end{equation}
where $c$ is the integration constant and $z_0=x_0+iy_0$ is a fixed constant. Here we choose $z_0=x_0$ to be positive real. We see that $\int_{z_0}^{\infty}e^{-e^t}dt$ converges to a finite nonzero constant, say $d_0$, along the positive real axis. Since $t^2e^{-e^t}$ is decreasing on $[x,\infty)$ for large $x$, we have
\begin{equation}\label{Eq 4nalr-1}
\begin{split}
\int_{x_0}^{x}e^{-e^t}dt=\int_{x_0}^{\infty}e^{-e^t}dt-\int_{x}^{\infty}\left(t^2e^{-e^t}\right)t^{-2}dt=d_0+O\left(xe^{-e^x}\right), \quad x\to\infty.
\end{split}
\end{equation}
It follows that $f(x)=(c+d_0)e^{e^x}+O(x)$ as $x\to\infty$ along the positive real axis. If $c+d_0\not=0$, then $f(z)$ has sufficiently large modulus when $x$ is large. However, the particular case $c+d_0=0$ cannot be excluded out.

Let $\theta\in(0,\pi/2)$ be a constant. Instead of the positive real axis, we consider the ray $\Phi$ on which $z=re^{i\theta}$, $r\in[r_0,\infty)$. For a point $z=x+iy\in \Phi$, we have $y=x\tan\theta$. We connect the two points $z_0=x_0$ and $z_1=x$ by a horizontal line segment $L_0$ and the two points $z_1=x$ and $z=x+iy$ by a vertical line segment $V$ respectively. Let $L$ be the joint of $L_0$ and $V$. For simplicity, denote $\mathbf{r}=e^{x}$. By the Cauchy integral theorem, we may choose the path of integration as $L$ for the solution in \eqref{Eq 4} and it follows that
\begin{equation}\label{Eq 70-cbuy-0}
\begin{split}
f(z)=e^{e^z}\left[c+\int_{x_0}^{x}e^{-e^s}ds+i\mathbf{H}\right],
\end{split}
\end{equation}
where
\begin{equation}\label{Eq 70-cbuy}
\begin{split}
\mathbf{H}=\int_{0}^{y}e^{-\mathbf{r}\cos t}e^{-i\mathbf{r}\sin t}dt=\mathbf{H}_1+i\mathbf{H}_2
\end{split}
\end{equation}
and
\begin{equation}\label{Eq 70-cbuy-1}
\begin{split}
\mathbf{H}_1&=\int_{0}^{y}e^{-\mathbf{r}\cos t}\cos\left(-\mathbf{r}\sin t\right)dt,\\
\mathbf{H}_2&=\int_{0}^{y}e^{-\mathbf{r}\cos t}\sin\left(-\mathbf{r}\sin t\right)dt.
\end{split}
\end{equation}
We may write $y=2\mathbf{k}\pi+\zeta$ with a large integer $\mathbf{k}$ and a number $\zeta\in[-\pi/2,3\pi/2)$. Then we may write $\mathbf{H}_1$ and $\mathbf{H}_2$ as
\begin{equation}\label{Eq 70-cbuy-2}
\begin{split}
\mathbf{H}_1=\mathbf{I}_{11}+\mathbf{I}_{12}, \quad \mathbf{H}_2=\mathbf{I}_{21}+\mathbf{I}_{22},
\end{split}
\end{equation}
where
\begin{equation}\label{Eq 70-cbuy-3}
\begin{split}
\mathbf{I}_{11}&=\int_{0}^{2\mathbf{k}\pi}e^{-\mathbf{r}\cos t}\cos\left(-\mathbf{r}\sin t\right)dt,\\
\mathbf{I}_{12}&=\int_{2\mathbf{k}\pi}^{2\mathbf{k}\pi+\zeta}e^{-\mathbf{r}\cos t}\cos\left(-\mathbf{r}\sin t\right)dt
\end{split}
\end{equation}
and
\begin{equation}\label{Eq 70-cbuy-5}
\begin{split}
\mathbf{I}_{21}&=\int_{0}^{2\mathbf{k}\pi}e^{-\mathbf{r}\cos t}\sin\left(-\mathbf{r}\sin t\right)dt,\\
\mathbf{I}_{22}&=\int_{2\mathbf{k}\pi}^{2\mathbf{k}\pi+\zeta}e^{-\mathbf{r}\cos t}\sin\left(-\mathbf{r}\sin t\right)dt.
\end{split}
\end{equation}
Note that $\int_{x_0}^{x}e^{-e^s}ds=d_0+O(xe^{-e^x})$ as $x\to\infty$ by \eqref{Eq 4nalr-1}. So we need to estimate $\mathbf{I}_{11}$, $\mathbf{I}_{12}$, $\mathbf{I}_{21}$ and $\mathbf{I}_{22}$, respectively. Note that $\cos t$ is even and $\sin t$ is odd and also that $\cos t$ and $\sin t$ are both periodic with period $2\pi$.

Consider first $y=2\mathbf{k}\pi+\zeta$ with a number $\zeta\in[-\pi/2+\varepsilon,\pi/2-\varepsilon]$. For $\mathbf{I}_{11}$, by the first identity in \eqref{Eq 70-cor4}, we have
\begin{equation}\label{Eq yfytq-2}
\begin{split}
\mathbf{I}_{11}=\mathbf{k}\int_{0}^{2\pi}e^{-\mathbf{r}\cos t}\cos\left(\mathbf{r}\sin t\right)dt=2\mathbf{k}\pi.
\end{split}
\end{equation}
For $\mathbf{I}_{12}$, we have
\begin{equation}\label{Eq yfytq-3}
\begin{split}
\left|\mathbf{I}_{12}\right|=\left|\int_{0}^{\zeta}e^{-\mathbf{r}\cos t}\cos\left(\mathbf{r}\sin t\right)dt\right|
\leq\int_{0}^{|\zeta|}\left|e^{-\mathbf{r}\cos t}\cos\left(\mathbf{r}\sin t\right)\right|dt
\leq \left|\zeta e^{-\mathbf{r}\cos \zeta}\right|.
\end{split}
\end{equation}
Obviously, $\mathbf{I}_{12}\to 0$ as $x\to\infty$. Then by \eqref{Eq 70-cbuy-2}, \eqref{Eq yfytq-2} and \eqref{Eq yfytq-3} we see that $|\mathbf{H}_1|\geq (1-\varepsilon)y$. Similarly, we may use the second identity in \eqref{Eq 70-cor4} to show that $\mathbf{I}_{21}=0$ and also that $|\mathbf{I}_{22}|\leq|\zeta e^{-\mathbf{r}\cos \zeta}|$. Thus by \eqref{Eq 70-cbuy-2} we have $\mathbf{H}_2=\mathbf{I}_{22}\to 0$ as $x\to\infty$. Then we see from equations in \eqref{Eq 70-cbuy-0}-\eqref{Eq 70-cbuy-5} that, for each integration constant $c$, the solution $f(z)$ in \eqref{Eq 4} satisfies $|f(z)|\geq (1-\varepsilon)(\sin\theta)re^{\mathbf{r}\cos \zeta}$ for $z=x+iy=re^{i\theta}\in\Phi$ such that $y=2\mathbf{k}\pi+\zeta$ with a large integer $\mathbf{k}$ and a number $\zeta\in[-\pi/2+\varepsilon,\pi/2-\varepsilon]$.

Consider next $y=2\mathbf{k}\pi+\zeta$ with a number $\zeta\in[\pi/2+\varepsilon,3\pi/2-\varepsilon]$. We still have $\mathbf{I}_{11}=2\mathbf{k}\pi$ and $\mathbf{I}_{21}=0$. To estimate $\mathbf{I}_{12}$ and $\mathbf{I}_{22}$, we consider the two cases $\zeta\in[\pi/2+\varepsilon,\pi]$ and $\zeta\in[\pi,3\pi/2-\varepsilon]$, respectively. When $\zeta\in[\pi/2+\varepsilon,\pi]$, by the same arguments as before, we have
\begin{equation}\label{Eq 70-cor4gofyu}
\begin{split}
\left|\mathbf{I}_{12}\right|\leq \int_{2\mathbf{k}\pi}^{2\mathbf{k}\pi+\zeta}\left|e^{-\mathbf{r}\cos t}\cos\left(\mathbf{r}\sin t\right)\right|dt\leq \left|\zeta e^{-\mathbf{r}\cos\zeta}\right|.
\end{split}
\end{equation}
On the other hand, when $\zeta\in[\pi,3\pi/2-\varepsilon]$, we have
\begin{equation*}%\label{Eq 70-cor4gofyu-1}
\begin{split}
\mathbf{I}_{12}=\int_{2\mathbf{k}\pi}^{2\mathbf{k}\pi+2\pi}e^{-\mathbf{r}\cos t}\cos\left(\mathbf{r}\sin t\right)dt-\int_{2\mathbf{k}\pi+\zeta}^{2\mathbf{k}\pi+2\pi}e^{-\mathbf{r}\cos t}\cos\left(\mathbf{r}\sin t\right)dt,
\end{split}
\end{equation*}
which together with the first identity in \eqref{Eq 70-cor4} yields
\begin{equation}\label{Eq 70-cor4gofyu-2}
\begin{split}
|\mathbf{I}_{12}-2\pi|\leq \left|(2\pi-\zeta)e^{-\mathbf{r}\cos\zeta}\right|.
\end{split}
\end{equation}
Similarly, we may use the second identity in \eqref{Eq 70-cor4} to obtain $|\mathbf{I}_{22}|\leq |\zeta e^{-\mathbf{r}\cos\zeta}|$ or $|\mathbf{I}_{22}|\leq |(2\pi-\zeta)e^{-\mathbf{r}\cos\zeta}|$. Since $|e^{e^z}|=e^{\mathbf{r}\cos\zeta}$ and $\cos\zeta<0$ and since $\mathbf{I}_{11}=2\mathbf{k}\pi=y-\zeta$ and $\mathbf{I}_{21}=0$, by the inequalities in \eqref{Eq 70-cor4gofyu} and \eqref{Eq 70-cor4gofyu-2} together with similar inequalities for $|\mathbf{I}_{22}|$, we see from equations in \eqref{Eq 70-cbuy-0}-\eqref{Eq 70-cbuy-5} that, for each integration constant $c$, the solution $f(z)$ in \eqref{Eq 4} satisfies $|f(z)|\leq 2\pi$ for $z=x+iy\in\Phi$ such that $y=2\mathbf{k}\pi+\zeta$ with a large integer $\mathbf{k}$ and a number $\zeta\in[\pi/2+\varepsilon,3\pi/2-\varepsilon]$.

On the basis of the previous estimates for $|f(z)|$ of $f(z)$ in \eqref{Eq 4} along the ray $\Phi$, we define a sequence $\{r_k\}$ in the way that $r_{2l}\sin\theta=2(2l+\mathbf{k})\pi-\pi/2+\varepsilon$ if $k=2l$ and $r_{2l+1}\sin\theta=2(2l+\mathbf{k})\pi+\pi/2-\varepsilon$. Then the inequality in \eqref{Eq 3} holds for all $z=x+iy=re^{i\theta}$ such that $r \in E=\cup_{l=0}^{\infty}[r_{2l},r_{2l+1}]$. Since $(r_{2l+1}-r_{2l})\sin\theta=\pi-2\varepsilon$, the set $E=\cup_{l=0}^{\infty}[r_{2l},r_{2l+1}]$ has infinite logarithmic measure, i.e., $\int_{E}dr/r=\infty$.

\subsection{Plan of the Paper}

The remainder of this paper is structured as follows. In Section~\ref{Proof-1}, we prove Theorem~\ref{maintheorem1} in the general case of $h(z)$. For a constant $\theta\in(0,\pi/2n)$, we will consider the ray $\Phi$ on which $z=re^{i\theta}$, $r\in[r_0,\infty)$. We would like to define the corresponding $\mathbf{H}$ as in the simplest case of $h(z)$ and estimate the modulus of $\mathbf{H}$. To this end, in Subsection~\ref{Part I} we first consider the asymptotic behavior of the function
\begin{equation}\label{Eq 63hil}
\begin{split}
U(z)=\int_{z_0}^zS(t)e^{P(t)}dt,
\end{split}
\end{equation}
where $z_0$ is fixed and $|z_0|=r_0>\dot{r}$, and write $U(z)=e^{\mathbf{u}(x,y)+i\mathbf{v}(x,y)}$ with two functions $\mathbf{u}(x,y)$ and $\mathbf{v}(x,y)$ in a carefully chosen region of the plane. In so doing, we are able to write $\mathbf{H}=\int_{\mathbf{y}_0}^{y}e^{-e^{\mathbf{u}(x,t)+i\mathbf{v}(x,t)}}dt$ for a point $\mathbf{y}_0$ such that $\mathbf{v}(x,\mathbf{y}_0)=0$. We note that $\mathbf{H}$ is too sensitive to the perturbation of $t$ when $x$ is large and $\mathbf{v}(x,t)<0$. In Subsection~\ref{Part II} we shall choose a simple smooth curve $\Omega$ which is asymptotically close to the ray $z=re^{i\pi/2n}$, $r\in[r_0,\infty)$ and on which $e^{\mathbf{u}(x,y)}=\omega$ for a constant $\omega>0$ and also choose a family of simple smooth curves $L_k$, $k=0,1,\cdots$ on which $\mathbf{v}(x,y)=2k\pi$. Note that each $L_k$ is perpendicular to $\Omega$. By integrating $e^{-U(z)}$ along the curve $\Omega$ and along each curve $L_k$ respectively, we use the Cauchy integral theorem to relate the value of $\mathbf{H}$ to the values of $\int_{\Omega}e^{-U(t)}dt$ and $\int_{L_k}e^{-U(t)}dt$. In~Subsection~\ref{Part III}, we estimate the value of $\int_{L_k}e^{-U(t)}dt$ and also, using the two identities in \eqref{Eq 70-cor4}, estimate the value of $\int_{\Omega}e^{-U(t)}dt$ respectively. In~Subsection~\ref{Part IV}, we obtain estimates for $|f(z)|$ for $z$ in most parts of $\Phi$ and in particular define the desired set $E$. In Section~\ref{Lower bound} we will further look at the modulus $|f(z)|$ of $f(z)$ in \eqref{Eq 4} and provide a lower bound for $|f(z)|$ for $z=x+iy\in\Phi$ such that $y=2\mathbf{k}\pi+\zeta$ with a large integer $\mathbf{k}$ and a number $\zeta\in[\pi/2+\varepsilon,3\pi/2-\varepsilon]$. Finally, in Section~\ref{Concluding remarks} we illustrate the applications of the results of Theorem~\ref{maintheorem1} and Theorem~\ref{maintheorem2} and, meanwhile, extend the method in the proof of Theorem~\ref{maintheorem1} to more general case.

\section{Proof of Theorem~\ref{maintheorem1}}\label{Proof-1}

Let $f(z)$ be a solution of equation \eqref{Eq 1}. Then, outside the disc $D(0,\dot{r})$, by integration we may write $f(z)$ in the form
\begin{equation}\label{Eq 63}
\begin{split}
f(z)=e^{U(z)}\left[c+\int_{z_0}^{z}e^{-U(t)}dt\right],
\end{split}
\end{equation}
where $U(z)$ is defined in \eqref{Eq 63hil}, $c$ is the integration constant and $z_0=x_0+iy_0$. In the following, we always suppose that $x_0$ is large enough. Our proof will be divided into four parts.

\subsection{Preliminaries}\label{Part I}

To give a more specialised form for $U(z)$ in \eqref{Eq 63hil}, in this subsection we first consider the asymptotic behavior of the function
\begin{equation}\label{Eq 43}
\begin{split}
G(z)=e^{-P(z)}\int_{z_0}^zS(t)e^{P(t)}dt.
\end{split}
\end{equation}
Denote $\theta_k=\frac{2k-1}{2n}\pi$, $k=0,1,\cdots,2n$ and let
\begin{equation}\label{Eq 43ab}
\begin{split}
A_k=\left\{z=re^{i\theta}: \quad r\in[r_0,\infty), \quad \theta\in(\theta_{k},\theta_{k+1})\right\}.
\end{split}
\end{equation}
Note that $\theta_{2n}-\theta_{0}=2\pi$. Then along the ray $z=re^{i\theta}$, $r\in[r_0,\infty)$, if $\theta\in(\theta_{k},\theta_{k+1})$ and $\cos n\theta>0$, then $\log |e^{P(z)}|$ is increasing on $[r_0,\infty)$ and $|e^{P(z)}|\geq e^{r^{n}/2}$ there; if $\theta\in(\theta_{k},\theta_{k+1})$ and $\cos n\theta<0$, then $\log |e^{P(z)}|$ is decreasing on $[r_0,\infty)$ and $|e^{P(z)}|\leq e^{-r^{n}/2}$ there. See~\cite{Banklangley1987} and also \cite[Lemma~5.14]{Laine1993}. Moreover, as shown by Bank and Langley \cite{Banklangley1987} (see also \cite{zhang2021,zhang2021-1,Zhang2022}), we have the following

\begin{lemma}\label{Lemma 0}
For the function $G(z)$ defined in \eqref{Eq 43}, there is a rational function $Q(z)$ and a large integer $N$ such that along the ray $z=re^{i\theta}$, $r\in[r_0,\infty)$ and $\theta\in(\theta_{k},\theta_{k+1})$ and $\cos n\theta<0$,
\begin{equation}\label{Eq 44}
\begin{split}
G(re^{i\theta})=Q(re^{i\theta})+c_1e^{-P(z)}+O\left(r^{-N}\right),
\quad r\to\infty,
\end{split}
\end{equation}
where $c_1=c_1(\theta)$ is a constant, while along the ray $z=re^{i\theta}$, $r\in[r_0,\infty)$ and $\theta\in(\theta_{k},\theta_{k+1})$ and $\cos n\theta>0$,
\begin{equation}\label{Eq 45}
\begin{split}
G(re^{i\theta})=Q(re^{i\theta})+O\left(r^{-N}\right), \quad r\to\infty.
\end{split}
\end{equation}
\end{lemma}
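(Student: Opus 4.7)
The plan is to prove both asymptotic expansions by iterated integration by parts of the integral $\int_{z_0}^z S(t)e^{P(t)}\,dt$. The starting identity is $S(t)e^{P(t)} = F_0(t)\,\tfrac{d}{dt}e^{P(t)}$, where $F_0 := S/P'$ is a rational function asymptotic to $z^{m-n+1}$ at infinity. Defining inductively $F_{j+1} := F_j'/P'$, each $F_j$ is a rational function whose degree at infinity drops by $n$ with each step, so $F_j(z) = O(|z|^{m-n+1-jn})$ as $z\to\infty$. Applying integration by parts $M$ times,
\[
\int_{z_0}^z S(t)e^{P(t)}\,dt = e^{P(z)}Q_M(z) - e^{P(z_0)}Q_M(z_0) + (-1)^M\int_{z_0}^z F_M(t)P'(t)e^{P(t)}\,dt,
\]
where $Q_M(z) := \sum_{j=0}^{M-1}(-1)^j F_j(z)$ is rational. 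Dividing through by $e^{P(z)}$,
\[
G(z) = Q_M(z) - Q_M(z_0)e^{P(z_0)-P(z)} + (-1)^M e^{-P(z)}\int_{z_0}^z F_M(t)P'(t)e^{P(t)}\,dt.
\]
I fix $M$ large enough that $|F_M(z)| = O(|z|^{-N})$ uniformly at infinity and take $Q := Q_M$. The entire problem then reduces to controlling the last integral in each of the two sign regimes for $\cos n\theta$.

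For rays with $\cos n\theta>0$, the factor $|e^{-P(z)}|$ decays like $e^{-r^n/2}$, so the middle term $Q_M(z_0)e^{P(z_0)-P(z)}$ is already $O(r^{-N})$. For the remainder, one more integration by parts turns $\int_{z_0}^z F_M P' e^P\,dt$ into a boundary term at $z$ of size $|F_M(z)e^{P(z)}|$ plus a tail of the same shape with $F_{M+1}$; iterating shows $|\int_{z_0}^z F_M P' e^{P}\,dt| = O(|F_M(z) e^{P(z)}|)$, so after multiplying by $|e^{-P(z)}|$ the remainder is $O(|F_M(z)|) = O(r^{-N})$. This yields \eqref{Eq 45} at once.

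For rays with $\cos n\theta<0$, $|e^{P(t)}|$ decays exponentially as $|t|\to\infty$, so the improper integral $K(\theta) := \int_{z_0}^{\infty e^{i\theta}} F_M(t) P'(t) e^{P(t)}\,dt$ converges absolutely. Writing $\int_{z_0}^z = K(\theta) - \int_z^{\infty e^{i\theta}}$ and applying one integration by parts to the tail gives $\int_z^{\infty e^{i\theta}} F_M P' e^P\,dt = -F_M(z)e^{P(z)} + O(F_{M+1}(z)e^{P(z)})$, so after multiplying by $e^{-P(z)}$ this tail contributes only $O(|F_M(z)|) = O(r^{-N})$. The surviving piece $(-1)^M K(\theta) e^{-P(z)}$ combines with the boundary constant $-Q_M(z_0)e^{P(z_0)-P(z)}$ into a single expression $c(\theta) e^{-P(z)}$, producing \eqref{Eq 44}. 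The main obstacle is precisely the definition of $K(\theta)$: since $z_0$ is fixed and its argument generally differs from $\theta$, one must first deform from $z_0$ to a point on the ray within $\mathbb{C}\setminus\mathbb{D}$, which is legitimate because $S(t)e^{P(t)}$ is holomorphic there, and then check that any two admissible deformations differ only by constants that can be absorbed into $c(\theta)$. This forces $c(\theta)$ to be piecewise constant on each Stokes sector $(\theta_{k-1},\theta_{k+1})$ with $\cos n\theta<0$, consistent with the statement of the lemma.
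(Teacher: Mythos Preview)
Your approach via repeated integration by parts is exactly the Bank--Langley method to which the paper attributes this lemma (the paper does not give its own proof of Lemma~\ref{Lemma 0} but cites \cite{Banklangley1987}; it does, however, prove the closely related Lemma~\ref{Lemma 1} by the same scheme). Your treatment of rays with $\cos n\theta<0$ is correct: the tail $\int_z^{\infty e^{i\theta}}$ is controlled because $|e^{P(t)}|$ decays there, and one further integration by parts bounds it by $O(r^{-N}|e^{P(z)}|)$.

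There is, however, a genuine gap in the case $\cos n\theta>0$. Your claim that ``iterating shows $|\int_{z_0}^z F_M P' e^{P}\,dt| = O(|F_M(z) e^{P(z)}|)$'' does not hold as stated: each further integration by parts replaces the remainder integral by one of the same form with $F_{M+L}$ in place of $F_M$, but the implied constants in $F_j(z)=O(|z|^{m-n+1-jn})$ grow factorially in $j$ (since $F_{j+1}=F_j'/P'$ multiplies the leading coefficient by a factor comparable to $j$), so the resulting expansion is divergent and the remainders do not tend to zero. What is needed instead is a direct estimate of the single remainder $e^{-P(z)}\int_{z_0}^z F_M(t)P'(t)e^{P(t)}\,dt$ along the ray, using that $u(s)=\mathrm{Re}\,P(se^{i\theta})$ is increasing with $u'(s)\sim n\cos(n\theta)\,s^{n-1}$. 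Splitting at $s=r/2$, the part over $[r_0,r/2]$ is annihilated by the factor $e^{u(r/2)-u(r)}=O(e^{-cr^n})$, while on $[r/2,r]$ one has $|F_M P'|=O(r^{-N+n-1})$ and $\int_{r/2}^r e^{u(s)-u(r)}\,ds=O(r^{-(n-1)})$, giving $O(r^{-N})$ as required. Alternatively, one may deform the contour through an adjacent decay sector, exactly as the paper does in its proof of Lemma~\ref{Lemma 1}.
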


In the original proof of Lemma~\ref{Lemma 0} in~\cite{Banklangley1987}, the error terms in the two estimates \eqref{Eq 44} and \eqref{Eq 45} take the form $O(r^{-2})$ or $O(r^{-1})$, but their proof gives the error term $O(r^{-N})$. Moreover, by the Phragm\'{e}n--Lindel\"{o}f theorem (see \cite[theorem~7.3]{hollandasb}), the constant $c_1$ in \eqref{Eq 44} remains the same for any $\theta$ such that $\theta\in(\theta_{k},\theta_{k+1})$ and $\cos n\theta<0$.

To prove Theorem~\ref{maintheorem1}, we also need to consider the asymptotic behavior of $G(z)$ along curves that are asymptotically close to the ray $z=re^{i\pi/2n}$. Here and in the following, we always assume that curves appearing are simple and smooth. Let $\omega>0$ be a constant. We shall define a slightly larger domain than $A_{0}$ in \eqref{Eq 43ab} as
\begin{equation}\label{Eq 43abarefq}
\begin{split}
A_{\omega}=\left\{z=re^{i\theta}: \left|\theta\right|\leq \frac{\pi}{2n}+\varepsilon, \quad r\in[r_0,\infty), \quad \left|e^{P(z)}\right|>\frac{\omega}{2|z|^{2|m-n+1|}}\right\}.
\end{split}
\end{equation}
We may of course suppose that $z_0\in A_{\omega}$. To deal with $|e^{P(z)}|$, it will be convenient to write the polynomial $P(z)$ in \eqref{Eq 41} as
\begin{equation}\label{Eq 65}
\begin{split}
P(z)=u(x,y)+iv(x,y).
\end{split}
\end{equation}
Note that $v(x,y)$ is the \emph{harmonic conjugate} of $u(x,y)$. Denote by $\epsilon(x)$ any real quantity such that $|\epsilon(x)|=O(x^{-1})$ as $x\to\infty$ but does not necessarily the same in each appearance. The notation $\epsilon(y)$ is understood in the same way. Then we prove the following

\begin{lemma}\label{Lemma 1}
For the function $G(z)$ defined in \eqref{Eq 43}, there is a rational function $Q(z)$ and a large integer $N>2|m-n+1|$ such that
\begin{equation}\label{Eq 45gkyu}
\begin{split}
G(re^{i\theta})=Q(re^{i\theta})+\left[c_2-Q(z_0)e^{P(z_0)}\right]e^{-P(z)}+O\left(r^{-N}\right)
\end{split}
\end{equation}
uniformly as $z=re^{i\theta}\to\infty$ in $A_{\omega}$, where $c_2=c_2(\theta)$ is a constant dependent on $\theta$.
\end{lemma}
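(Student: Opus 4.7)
The strategy is to follow the ODE / integration-by-parts route behind Lemma~\ref{Lemma 0} (Bank--Langley), adjusting the path of integration to handle the enlarged domain $\mathbf{A}_{0,\omega}$. Starting from the ODE $G'(z)+P'(z)G(z)=S(z)$, $G(z_0)=0$, I would build a rational approximate solution by iterating $T_0=S/P'$, $T_{j+1}=-T_j'/P'$. Since $S(z)=nz^m(1+O(z^{-1}))$ and $P'(z)=nz^{n-1}(1+O(z^{-1}))$, each $T_j$ is a rational function (holomorphic for $|z|\geq r_0$) with $T_j(z)=O(|z|^{m+1-(j+1)n})$, and the truncation $Q(z):=\sum_{j=0}^{N-1}T_j(z)$ satisfies $Q'+P'Q=S+\eta$ with $\eta(z):=T'_{N-1}(z)=O(|z|^{m-Nn})$.

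Setting $H:=G-Q$ gives the first-order linear ODE $H'+P'H=-\eta$ with $H(z_0)=-Q(z_0)$, and variation of parameters yields
\[
H(z)=e^{-P(z)}\Bigl[-Q(z_0)e^{P(z_0)}-\int_{z_0}^{z}\eta(t)e^{P(t)}\,dt\Bigr].
\]
Since $\eta e^{P}$ is analytic for $|t|\geq r_0$, the path from $z_0$ to $z$ can be deformed freely. I would split it into a fixed piece $\gamma_1$ from $z_0$ out to infinity along a ray $\arg t=\theta'$ in an adjacent decay sector (so $\cos n\theta'<0$) and a recovery piece $\gamma_2$ coming from infinity back to $z$ in the same asymptotic direction. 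On $\gamma_1$ the integrand decays like $|t|^{m-Nn}e^{-|t|^{n}/2}$, so the integral converges absolutely, and I define $c_0(\theta):=-\int_{\gamma_1}\eta e^{P}\,dt$; the Phragm\'en--Lindel\"of argument already invoked just after Lemma~\ref{Lemma 0} shows that $c_0(\theta)$ is piecewise constant in $\theta$ (constant over each choice of adjacent decay sector).

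The core work is then a uniform bound $\int_{\gamma_2}\eta e^{P}\,dt=|e^{P(z)}|\cdot O(|z|^{-N})$ for $z\in\mathbf{A}_{0,\omega}$. I would take $\gamma_2$ to first leave $z$ along the circle $|t|=|z|$ until it meets a ray $\arg t=\theta'$ in $\{\cos n\theta'\leq-\delta\}$ for a small fixed $\delta>0$, and then go out radially. On the circular piece $|e^{P(t)}|$ is monotonically decreasing from $|e^{P(z)}|$ by the leading behaviour $u(x,y)=r^{n}\cos n\theta+O(r^{n-2})$, so that contribution is at most $|e^{P(z)}|\cdot O(|z|)\cdot O(|z|^{m-Nn})$; on the radial piece the exponential decay of $|e^{P}|$ dominates every polynomial. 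Combined with the defining inequality $|e^{P(z)}|\geq\omega|z|^{-2|m-n+1|}$ of $\mathbf{A}_{0,\omega}$, the hypothesis $N>2|m-n+1|$ is precisely what is needed to absorb the polynomial deficit in $|e^{P(z)}|$ and still produce an $O(|z|^{-N})$ remainder after multiplying $H(z)$ by $e^{-P(z)}$; reassembling then gives \eqref{Eq 45gkyu}. The hard part is this last uniform estimate: one must verify that the circular leg of $\gamma_2$ can genuinely be traversed entirely in the direction of decreasing $|e^{P}|$, which depends on which side of $\mathbf{A}_0$ the point $z$ sits and hence on the correct choice of $\theta'$; if the polynomial error is not immediately small enough, a further iteration of integration by parts along $\gamma_2$ (sharpening $\eta$ at the cost of more boundary terms that fold harmlessly into $Q$) closes the estimate.
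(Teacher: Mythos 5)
Your proposal is correct and follows essentially the same route as the paper: the recursion $T_0=S/P'$, $T_{j+1}=-T_j'/P'$ is exactly the paper's $S_j$, $Q_j$ construction (phrased via variation of parameters rather than repeated integration by parts), the constant $c_0(\theta)$ arises in the same way from a fixed path escaping to infinity through the adjacent decay sector, and the remainder is bounded by a return path on which $|e^{P}|$ is controlled by $|e^{P(z)}|$. The one worry you flag about the direction of the circular leg is resolved in the paper by restricting, via Lemma~\ref{Lemma 0}, to $z\in\mathbf{A}_{0,\omega}$ with $|\theta-\pi/2n|<\varepsilon$ (the interior and the other edge being handled by \eqref{Eq 45} and symmetry), which is exactly the fix you sketch.
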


\begin{proof}
We define two sequence $\{S_j\}_{j\in \mathbb{N}}$ and $\{Q_j\}_{j\in \mathbb{N}}$ inductively by
\begin{equation*}
\begin{split}
S_1=\frac{S}{P'}, \quad Q_1=-S_1', \quad \cdots, \quad S_{j+1}=\frac{Q_j}{P'}, \quad Q_{j+1}=(-1)^jS'_{j+1}, \quad j\geq 1.
\end{split}
\end{equation*}
Recall that $S(z)=nz^{m}[1+O(z^{-1})]$ as $z\to\infty$ for some integer $m$. Then, by looking at the degree $n$ of $P(z)$ and the integer $m$, we see immediately that
\begin{equation}\label{Eq 47}
\begin{split}
\left|Q_k(z)\right|\leq B_k|z|^{m-kn}
\end{split}
\end{equation}
holds for a constant $B_k>0$ provided that $|z|$ is sufficiently large, say $|z|\geq r_k$. Take now the integral from the right-hand side of equation \eqref{Eq 43}. Integration by parts results, for each $k\in \mathbb{N}$, in
\begin{equation*}%\label{Eq 47jfaop}
\begin{split}
\int_{z_0}^zS(t)e^{P(t)}dt&=\left[S_1(t)e^{P(t)}\right]_{z_0}^{z}+\int_{z_0}^zQ_1(t)e^{P(t)}dt=\cdots\\
&=\left[(S_1(t)+\cdots+S_k(t))e^{P(t)}\right]_{z_0}^{z}+\int_{z_0}^zQ_k(t)e^{P(t)}dt.
\end{split}
\end{equation*}
If $Q_k(z)\equiv0$, we have just to define $Q=S_1(z)+\cdots+S_k(z)$. Otherwise, we select $k$ large enough so that $m-kn\leq -(N+1)$ for an integer $N$. Then we have to estimate $\int_{z_0}^zQ_k(t)e^{P(t)}dt$.

Instead of integrating $Q_k(t)e^{P(t)}$ along a ray from the point $z_0$ to $z$, we might as well take a new path of integration as in \cite[Lemma~4.1]{Jan-Martin2005}, namely, we first integrate $Q_k(t)e^{P(t)}$ from the point $z_0$ to $\infty$ along the ray $\Omega_1$ on which $t=z_0+re^{i\pi/n}$ and then come back along the ray $\Omega_2$ on which $t=w+re^{i\pi/n}$ from $\infty$ to a point $w$ such that $|z|=2|w|$ and finally move along a curve $\Omega_3$ from $w$ to $z$. By the Cauchy--Riemann equations, we have
\begin{equation*}
\begin{split}
P'(z)=u_x-iu_y=nz^{n-1}\left[1+O\left(z^{-1}\right)\right],
\end{split}
\end{equation*}
where $u_x$ and $u_y$ are the partial derivatives of $u=u(x,y)$ with respect to $x$ and $y$ respectively. By comparing the real and imaginary parts on both sides of the above equation, we find
\begin{equation*}
\begin{split}
u_x&=[1+\epsilon(x)]\text{Re}(nz^{n-1})+\epsilon(x)\text{Im}(nz^{n-1}),\\
u_y&=\epsilon(x)\text{Re}(nz^{n-1})-[1+\epsilon(x)]\text{Im}(nz^{n-1}).
\end{split}
\end{equation*}
For the point $z=re^{i\theta}\in\Omega_3$, we may write $nz^{n-1}=nr^{n-1}e^{i(n-1)\theta}=nr^{n-1}\cos(n-1)\theta+inr^{n-1}\sin(n-1)\theta$. In particular, when $n=1$, by the expression of $P(z)$ in \eqref{Eq 41} we have $u(x,y)=x$ and $v(x,y)=y$ and thus $u_x=1$ and $u_y=0$; when $n\geq 2$, we see that $u_x>0$ and $u_y<0$ when $|z|$ is large. Thus, on the curve $\Omega_3$ we may always suppose that $e^{P(z)}=e^{u(x,y)}e^{iv(x,y)}$ has the maximum modulus at the point $z$. Then, by the previous observation on the asymptotic behavior of $|e^{P(z)}|$, we see that
\begin{equation*}
\begin{split}
\int_{\Omega_1\cup\Omega_2}Q_k(t)e^{P(t)}dt=c_2(\theta)
\end{split}
\end{equation*}
converges. Note that $c_2(\theta)$ tends to a fixed constant as $z\to\infty$ in $A_{\omega}$. Thus we have
\begin{equation}\label{Eq 50}
\begin{split}
\int_{z_0}^{z}Q_k(t)e^{P(t)}dt=c_2(\theta)-\int_{\Omega_3}Q_k(t)e^{P(t)}dt.
\end{split}
\end{equation}
By \eqref{Eq 47} and the choice of $k$ we see that
\begin{equation}\label{Eq 51}
\begin{split}
\left|Q_k(z)e^{P(z)}\right|\leq B_k\left|e^{P(z)}\right||z|^{-(N+1)}
\end{split}
\end{equation}
for all $|z|$ sufficiently large. Define again $Q=S_1(z)+\cdots+S_k(z)$. By the choice of $\Omega_3$, we see that the length of the curve $\Omega_3$ is always of type $O(r)$, uniformly for all $z\in A_{\omega}$. Thus, by \eqref{Eq 51}, we have
\begin{equation}\label{Eq 52}
\begin{split}
\left|\int_{\Omega_3}Q_k(t)e^{P(t)}dt\right|\leq B_k\left|e^{P(z)}\right|O\left(|z|^{-N}\right)
\leq C_k\left|e^{P(z)}\right|r^{-N}
\end{split}
\end{equation}
for all large $r$ and some constant $C_{k}$. Thus we have \eqref{Eq 45gkyu} by \eqref{Eq 50} and \eqref{Eq 52} together with the definition of $G(z)$ in \eqref{Eq 43}. This completes the proof.
\end{proof}

In the following we will consider the solution in \eqref{Eq 63} along a ray $\Phi$ on which $z=re^{i\theta}$, $r\in[r_0,\infty)$ and $\theta\in(0,\pi/2n)$ or along some curve $\Omega$ which is asymptotically close to the ray $z=re^{i\pi/2n}$. Both $\Phi$ or $\Omega$ are contained in $A_{\omega}$. By Lemma~\ref{Lemma 1}, the function $G(z)$ defined in \eqref{Eq 43} can be written, for $z=re^{i\theta}$ in either $\Phi$ or $\Omega$, as
\begin{equation*}%\label{Eq 61}
\begin{split}
G(z)=Q(z)+c_3e^{-P(z)}+H(z),
\end{split}
\end{equation*}
where $c_3=c_2-Q(z_0)e^{P(z_0)}$ is a constant and
\begin{equation*}%\label{Eq 47jfaop-9y}
\begin{split}
H(z)=e^{-P(z)}\int_{z_0}^zQ_k(t)e^{P(t)}dt
\end{split}
\end{equation*}
satisfies $H(z)=O(r^{-N})$ as $z\to\infty$ along either $\Phi$ or $\Omega$. By the definition $U(z)=e^{P(z)}G(z)$, we may rewrite the integration constant $c$ in \eqref{Eq 63} as $ce^{-c_3}$. For any $\theta$, this redefinition does not affect the estimation of $|f(z)|$ in the following. Now, by our assumptions on $P(z)$ and $S(z)$, we see from Lemma~\ref{Lemma 1} that
\begin{equation}\label{Eq 47jfaop-9yhi}
\begin{split}
G(z)=z^{m-n+1}\left[1+\frac{Q(z)-z^{m-n+1}}{z^{m-n+1}}+\frac{H(z)}{z^{m-n+1}}\right]=z^{m-n+1}\left[1+O\left(z^{-1}\right)\right]
\end{split}
\end{equation}
along the aforementioned $\Phi$ or $\Omega$. By redefining the integration constant $c$ in \eqref{Eq 63}, we may write $U(z)$ in \eqref{Eq 63hil} uniformly as
\begin{equation*}%\label{Eq 63a}
\begin{split}
U(z)=e^{P(z)}G(z)=e^{P(z)+R(z)}
\end{split}
\end{equation*}
where $R(z)$ is chosen as an analytic branch of the function $\ln G(z)=\ln(Q(z)+H(z))$. Similar as for the expression of $P(z)$ in \eqref{Eq 65}, by \eqref{Eq 47jfaop-9yhi} we may also write $R(z)$ as
\begin{equation}\label{Eq 66}
\begin{split}
R(z)&=p(x,y)+iq(x,y)\\
&=(m-n+1)\ln\sqrt{x^2+y^2}+\epsilon(y)+i\left[(m-n+1)\arctan\frac{y}{x}+\epsilon(y)\right].
\end{split}
\end{equation}
Also, $q(x,y)$ is the harmonic conjugate of $p(x,y)$. Denote $\mathbf{u}(x,y)=u(x,y)+p(x,y)$ and $\mathbf{v}(x,y)=v(x,y)+q(x,y)$ for simplicity. By the definition of $A_{\omega}$ in \eqref{Eq 43abarefq}, we may define $U(z)=e^{\mathbf{u}(x,y)+i\mathbf{v}(x,y)}$ around a point $z=x+iy\in \Phi$ and then extend the definition to the whole $A_{\omega}$ by analytic continuation. Thus, in $A_{\omega}$, we may always write $U(z)$ as
\begin{equation}\label{Eq 63a}
\begin{split}
U(z)=e^{u(x,y)+p(x,y)+i[v(x,y)+q(x,y)]}=e^{\mathbf{u}(x,y)}\cos(\mathbf{v}(x,y))+ie^{\mathbf{u}(x,y)}\sin(\mathbf{v}(x,y)).
\end{split}
\end{equation}
Below we shall also use the suppressed notation $\mathbf{u}=\mathbf{u}(x,y)$ and $\mathbf{v}=\mathbf{v}(x,y)$ for simplicity.

Suppose that $\mathbf{u}$ and $\mathbf{v}$ have been defined. We look at $\mathbf{u}$ and $\mathbf{v}$ more carefully. By \eqref{Eq 65} and \eqref{Eq 66}, along the ray $\Phi$ we have
\begin{equation}\label{Eq 66vfiytdyst}
\begin{split}
\mathbf{u}=\frac{\cos n\theta}{(\sin\theta)^n}y^n\left[1+\epsilon(y)+O(y^{-n}\ln y)\right]
\end{split}
\end{equation}
and, along the ray $\Phi$ and the curve $\Omega$, we have
\begin{equation}\label{Eq 66vfiytdystbk}
\begin{split}
\mathbf{v}=\frac{\sin n\theta}{(\sin\theta)^n}y^n\left[1+\epsilon(y)\right]
\end{split}
\end{equation}
Note that $\theta$ is fixed as $z\to\infty$ along the curve $\Phi$ while $\theta$ is not necessarily fixed as $z\to\infty$ along the curve $\Omega$. Also note that $\theta\to \pi/2n$ as $z\to\infty$ along the curve $\Omega$.

Denote by $\mathbf{u}_y$ and $\mathbf{v}_y$ the partial derivatives of $\mathbf{u}$ and $\mathbf{v}$ with respect to $y$ respectively. We give asymptotic expressions for $\mathbf{v}_y$ along the ray $\Phi$ and the curve $\Omega$. By the two expressions in \eqref{Eq 63hil} and \eqref{Eq 63a} for $U(z)$, it is elementary to take the first derivative of $U(z)$ and obtain
\begin{equation}\label{Eq 63b}
\begin{split}
S(z)e^{P(z)}=e^{\mathbf{u}}\left[\mathbf{v}_y\cos\mathbf{v}+\mathbf{u}_y\sin\mathbf{v}+i(\mathbf{v}_y\sin\mathbf{v}-\mathbf{u}_y\cos\mathbf{v})\right].
\end{split}
\end{equation}
Moreover, by \eqref{Eq 47jfaop-9yhi} we have
\begin{equation}\label{Eq 63c}
\begin{split}
S(z)e^{P(z)}=\frac{S(z)}{G(z)}\left[e^{P(z)}G(z)\right]=\left[1+O\left(\frac{1}{z}\right)\right]nz^{n-1}e^{\mathbf{u}+i\mathbf{v}}, \quad z\to\infty.
\end{split}
\end{equation}
Note that $1+O(z^{-1})=1+\epsilon(y)+i\epsilon(y)$. We compare the real and imaginary parts on both sides of the two equations in \eqref{Eq 63b} and \eqref{Eq 63c} and find
\begin{equation}\label{Eq 77 a1 fu1aojr-3hilu}
\begin{split}
\mathbf{v}_y\cos\mathbf{v}+\mathbf{u}_y\sin\mathbf{v}=\text{Re}(nz^{n-1})&\left\{\left[1+\epsilon(y)\right]\cos\mathbf{v}+\epsilon(y)\sin\mathbf{v}\right\}\\
&-\text{Im}(nz^{n-1})\left\{\left[1+\epsilon(y)\right]\sin\mathbf{v}+\epsilon(y)\cos\mathbf{v}\right\}
\end{split}
\end{equation}
and
\begin{equation}\label{Eq 77 a1 fu1aojr-3hilu-1}
\begin{split}
\mathbf{v}_y\sin\mathbf{v}-\mathbf{u}_y\cos\mathbf{v}=\text{Re}(nz^{n-1})&\left\{\left[1+\epsilon(y)\right]\sin\mathbf{v}+\epsilon(y)\cos\mathbf{v}\right\}\\
&+\text{Im}(nz^{n-1})\left\{\left[1+\epsilon(y)\right]\cos\mathbf{v}+\epsilon(y)\sin\mathbf{v}\right\}.
\end{split}
\end{equation}
We multiply by $\sin\mathbf{v}$ on both sides of \eqref{Eq 77 a1 fu1aojr-3hilu} and by $\cos\mathbf{v}$ on both sides of \eqref{Eq 77 a1 fu1aojr-3hilu-1} and then make subtraction on both sides of the resulting two equations to obtain
\begin{equation}\label{Eq 77 a1 fu1aojr-3hilu-2giu-1}
\begin{split}
\mathbf{u}_y=\epsilon(y)\text{Re}(nz^{n-1})-[1+\epsilon(y)]\text{Im}(nz^{n-1}).
\end{split}
\end{equation}
Similarly, we multiply by $\cos\mathbf{v}$ on both sides of \eqref{Eq 77 a1 fu1aojr-3hilu} and by $\sin\mathbf{v}$ on both sides of \eqref{Eq 77 a1 fu1aojr-3hilu-1} and then make addition on both sides of the resulting two equations to obtain
\begin{equation}\label{Eq 77 a1 fu1aojr-3hilu-2giu-2}
\begin{split}
\mathbf{v}_y=[1+\epsilon(y)]\text{Re}(nz^{n-1})+\epsilon(y)\text{Im}(nz^{n-1}).
\end{split}
\end{equation}
Along the curve $\Phi$ and the curve $\Omega$, we may give explicit expressions for $\text{Re}(nz^{n-1})$ and $\text{Im}(nz^{n-1})$ as
\begin{equation}\label{Eq 66vfiytdyst-1}
\begin{split}
\text{Re}(nz^{n-1})&=n\frac{\cos (n-1)\theta}{(\sin\theta)^{n-1}}y^{n-1},\\
\text{Im}(nz^{n-1})&=n\frac{\sin (n-1)\theta}{(\sin\theta)^{n-1}}y^{n-1}.
\end{split}
\end{equation}
Then from \eqref{Eq 77 a1 fu1aojr-3hilu-2giu-1}, \eqref{Eq 77 a1 fu1aojr-3hilu-2giu-2} and \eqref{Eq 66vfiytdyst-1} we see that along the ray $\Phi$, as well as along the curve $\Omega$, we always have
\begin{equation}\label{Eq 77 a1 fu1aojr-3hilu-1-fu}
\begin{split}
\mathbf{v}_y=\text{Re}(nz^{n-1})[1+\epsilon(y)]=\frac{n\cos (n-1)\theta}{(\sin\theta)^{n-1}}y^{n-1}[1+\epsilon(y)]
\end{split}
\end{equation}
and also that
\begin{equation}\label{Eq 77 a1 fu1aojr-3hilu-2}
\begin{split}
\frac{\mathbf{u}_y}{\mathbf{v}_y}=\epsilon(y)
\end{split}
\end{equation}
when $n=1$ and
\begin{equation}\label{Eq 77 a1 fu1aojr-3hilu-3}
\begin{split}
\frac{\mathbf{u}_y}{\mathbf{v}_y}=-\frac{\sin(n-1)\theta}{\cos(n-1)\theta}[1+\epsilon(y)]
\end{split}
\end{equation}
when $n\geq 2$. Here we note that, along the ray $\Phi$, in the estimates in \eqref{Eq 77 a1 fu1aojr-3hilu}-\eqref{Eq 77 a1 fu1aojr-3hilu-3}, we may replace $y$ by $y=x\tan\theta$ and, in particular, replace the quantities $\epsilon(y)$ there by $\epsilon(x)$. Along the curve $\Omega$, we may also do this when $n\geq 2$, but this is not allowed when $n=1$.

For clarity, in the following subsections we shall distinguish the points in $\Phi$ and $\Omega$ by particularly writing $z=re^{i\vartheta}$ for the point $z$ in the curve $\Omega$ which will be defined later. Moreover, we shall use the notations
\begin{equation}\label{Eq 77 notation}
\begin{split}
\iota&=\frac{\sin(n-1)\theta}{\cos(n-1)\theta}, \qquad \iota_1=\frac{n\cos (n-1)\theta}{(\sin\theta)^{n-1}},\\
\kappa&=\frac{\sin(n-1)\vartheta}{\cos(n-1)\vartheta}, \qquad \kappa_1=\frac{n\cos (n-1)\vartheta}{(\sin\vartheta)^{n-1}}
\end{split}
\end{equation}
in both of the two cases $n=1$ and $n\geq 2$. Note that we always have $\iota>0$ and $\kappa>0$ when $n\geq 2$.

\subsection{Definition of the path of integration}\label{Part II}

From now on, we fix a constant $\theta\in(0,\pi/2n)$. Then we define the path of integration for $f(z)$ in \eqref{Eq 63} as the ray $\Phi$ on which
\begin{equation*}%\label{Eq 66 a1}
\begin{split}
z=x+iy, \quad y=x\tan\theta, \quad x\in[x_0,\infty).
\end{split}
\end{equation*}
In addition to $\Phi$, we also define the curves $\Psi_k$ and $\Omega$ as follows. By the definition of $\mathbf{v}$, for each integer $k\geq 0$, there is a curve $\Psi_k$ on which
\begin{equation*}%\label{Eq 4giubjb}
\begin{split}
z=x+iy, \quad \mathbf{v}(x,y)=2k\pi, \quad x\in[x_k,\infty).
\end{split}
\end{equation*}
We may of course suppose that $z_0\in \Psi_0$. Also, for the constant $\omega>0$ in $A_{\omega}$, there is a curve $\Omega$ on which
\begin{equation*}%\label{Eq 4giubjb-1}
\begin{split}
z=x+iy, \quad e^{\mathbf{u}(x,y)}=\omega, \quad x\in[x_0,\infty).
\end{split}
\end{equation*}
By substituting $z=re^{i\vartheta}$, $0<\vartheta<\pi/n$ into the equation $|e^{P(z)+R(z)}|=\omega$, we find $\cos n\vartheta=\epsilon(y)+O(y^{-n}\ln y)$, implying that $\vartheta=[1+\epsilon(y)+O(y^{-n}\ln y)]\pi/2n$ as $z\to\infty$ along the curve $\Omega$. Recalling the definition of $A_{\omega}$ in \eqref{Eq 43abarefq}, we may suppose that $\Phi$, $\Psi_k$ and $\Omega$ all lie entirely in $A_{\omega}$. It is elementary that each $\Psi_k$ is perpendicular to $\Omega$. By analytic continuation, we may suppose that the curve $\Psi_k$ with the curve $\Omega$ at the unique point $z_k=x_k+iy_k$, $k=0,1,\cdots$, for the first time.

For clarity, for a fixed point $z=x+iy\in \Phi$, below we shall write $\tilde{\mathbf{u}}=\mathbf{u}(s,t)$ and $\tilde{\mathbf{v}}=\mathbf{v}(s,t)$ which differ from $\mathbf{u}=\mathbf{u}(x,y)$ and $\mathbf{v}=\mathbf{v}(x,y)$ by the variables $s$ and $t$. We connect the point $\mathbf{z}_0=x+i\mathbf{y}_{0}\in L_0$ and the point $z=x+iy\in\Phi$ by a vertical line segment $V$. By the definition of $\mathbf{v}$ together with the expressions in \eqref{Eq 65} and \eqref{Eq 66}, we easily see that $\mathbf{y}_0=\epsilon(x)$ as $z\to\infty$ along the curve $\Psi_0$. We claim that $V$ intersects with each $\Psi_k$ at some unique point $\mathbf{z}_k=x+i\mathbf{y}_k$. In fact, when $n=1$, this is obvious since $\tilde{\mathbf{v}}=t[1+\epsilon(t)]$ as $t\to\infty$ and thus we may choose $\Psi_k$ in the way that $t=2k\pi[1+\epsilon(t)]$. When $n\geq 2$, recall the relation for $\mathbf{v}$ along $\Phi$ and $\Omega$ in \eqref{Eq 66vfiytdystbk}. By taking the first derivative on both sides of the equation $\tilde{\mathbf{v}}=\mathbf{v}(s,t)=2k\pi$ with respect to $s$ along the curve $\Psi_k$, we get
\begin{equation}\label{Eq 6aren;f}
\begin{split}
\tilde{\mathbf{v}}_s+\tilde{\mathbf{v}}_t\frac{dt}{ds}=0.
\end{split}
\end{equation}
Since $\mathbf{u}$ and $\mathbf{v}$ are the real and imaginary parts of an analytic function, we have $\mathbf{u}_y=-\mathbf{v}_x$ by the Cauchy--Riemann equations. This yields
\begin{equation}\label{Eq 6aren;f-1}
\begin{split}
\frac{dt}{ds}=\frac{\tilde{\mathbf{u}}_t}{\tilde{\mathbf{v}}_t}.
\end{split}
\end{equation}
In particular, along the part of the curve $\Psi_k$ between $\Omega$ and $\Phi$, denoted by $\Psi_k^{l}$, by the estimate \eqref{Eq 77 a1 fu1aojr-3hilu-3} for $\mathbf{u}_y/\mathbf{v}_y$ we see that $dt/ds<0$ for large $t$; along the part of the curve $\Psi_k$ in the right of $\Phi$, denoted by $\Psi_k^{r}$, we may replace the quantity $\epsilon(y)$ in \eqref{Eq 77 a1 fu1aojr-3hilu-3} by $\epsilon(x)$ and also see that $dt/ds<0$ for large $s$. We conclude that $t$ decreases as $t\to\infty$ along each curve $\Psi_k$. Thus the ray $\Phi$ intersects with each curve $\Psi_k$ at some unique point and also that $V$ intersects with each curve $\Psi_k$ at some unique point $\mathbf{z}_k=x+i\mathbf{y}_k$ for $k=0,1,2,\cdots,\mathbf{k}$ and some integer $\mathbf{k}$ dependent on $x$ such that $0\leq \mathbf{v}(x,y)-\mathbf{v}(x,\mathbf{y}_{\mathbf{k}})<2\pi$.

Here we note that, for the integer $\mathbf{k}$, if $\tilde{z}=x+i\tilde{y}\in A_{\omega}$ is a point between $\Psi_{\mathbf{k}-1}$ and $\Psi_{\mathbf{k}}$ or between $\Psi_{\mathbf{k}}$ and $\Psi_{\mathbf{k}+1}$, then
\begin{equation}\label{Eq 77 a1 fu1-a-ijg}
\begin{split}
\mathbf{u}(x,y)-\mathbf{u}(x,\tilde{y})=\frac{\mathbf{u}_y}{\mathbf{v}_y} \left[\mathbf{v}(x,y)-\mathbf{v}(x,\tilde{y}\right]\left[1+\epsilon(x)\right].
\end{split}
\end{equation}
In fact, denoting $\Delta y=y-\tilde{y}$, by Lagrange's mean value theorem, there are two constants $\delta_1,\delta_2\in(0,1)$ such that
\begin{equation}\label{Eq 77 a1 funk}
\begin{split}
\mathbf{u}(x,y)-\mathbf{u}(x,\tilde{y})&=\mathbf{u}_y(x,y+\delta_1\Delta y)\Delta y,\\
\mathbf{v}(x,y)-\mathbf{v}(x,\tilde{y})&=\mathbf{v}_y(x,y+\delta_2\Delta y)\Delta y.
\end{split}
\end{equation}
By the two estimates in \eqref{Eq 77 a1 fu1aojr-3hilu-2} and \eqref{Eq 77 a1 fu1aojr-3hilu-3} for $\mathbf{u}_y/\mathbf{v}_y$, we may write
\begin{equation}\label{Eq 77 a1 funk-1}
\begin{split}
\frac{\mathbf{u}_y(x,y+\delta_1\Delta y)}{\mathbf{v}_y(x,y+\delta_2\Delta y)}=\frac{\mathbf{u}_y(x,y)}{\mathbf{v}_y(x,y)}[1+\epsilon(x)]
\end{split}
\end{equation}
in both two cases $n=1$ and $n\geq 2$. Then the relation in \eqref{Eq 77 a1 fu1-a-ijg} follows from \eqref{Eq 77 a1 funk} and \eqref{Eq 77 a1 funk-1}. In particular, when $n=1$, by \eqref{Eq 77 a1 fu1aojr-3hilu-2} we have $\mathbf{u}(x,y)-\mathbf{u}(x,\tilde{y})=\epsilon(x)$.

For each $k=0,1,\cdots,\mathbf{k}$, we denote by $L_k$ the part of $\Psi_k$ between the point $z_k$ and the point $\mathbf{z}_k$ and by $\Omega_{k}$ the part of $\Omega$ between the two points $z_{0}$ and $z_{k}$, respectively. We integrate $e^{-U(z)}$ along the curve $L_k$ and the curve $\Omega_k$ respectively and denote
\begin{equation}\label{Eq ye4evrfnwli-1}
\begin{split}
\mathbf{F}_k=\int_{L_k}e^{-U(z)}dz
\end{split}
\end{equation}
and
\begin{equation}\label{Eq ye4evrfnwli-1-1}
\begin{split}
\mathbf{G}_k=\int_{\Omega_{k}}e^{-U(z)}dz.
\end{split}
\end{equation}
In particular, let $L$ be the joint of $L_0$ and $V$. Then, along the curve $L$, by the Cauchy integral theorem, we may write the solution in \eqref{Eq 63} as
\begin{equation}\label{Eq 76fiyt}
\begin{split}
f(z)=e^{U(z)}\left(c+\mathbf{F}_0+i\mathbf{H}\right),
\end{split}
\end{equation}
where
\begin{equation}\label{Eq 76fiyt-1}
\begin{split}
\mathbf{H}=\int_{\mathbf{y}_0}^{y}e^{-e^{\mathbf{u}(x,t)}\cos\mathbf{v}(x,t)}e^{-ie^{\mathbf{u}(x,t)}\sin \mathbf{v}(x,t)}dt=\mathbf{H}_1+i\mathbf{H}_2
\end{split}
\end{equation}
and
\begin{equation}\label{Eq 76fiyt-2}
\begin{split}
\mathbf{H}_1&=\int_{\mathbf{y}_0}^{y}e^{-e^{\mathbf{u}(x,t)}\cos\mathbf{v}(x,t)}\cos\left(-e^{\mathbf{u}(x,t)}\sin \mathbf{v}(x,t)\right)dt,\\
\mathbf{H}_2&=\int_{\mathbf{y}_0}^{y}e^{-e^{\mathbf{u}(x,t)}\cos\mathbf{v}(x,t)}\sin\left(-e^{\mathbf{u}(x,t)}\sin \mathbf{v}(x,t)\right)dt.
\end{split}
\end{equation}
To estimate $|f(z)|$, we need to estimate $\mathbf{F}_0$ and $\mathbf{H}$ respectively. For each $k=0,1,\cdots,\mathbf{k}$, we let $V_k$ be the part of $V$ between the two points $\mathbf{z}_0$ and $\mathbf{z}_k$ and denote
\begin{equation}\label{Eq ye4evrfnwli-2}
\begin{split}
\mathbf{J}_k=\int_{V_{k}}e^{-U(z)}dz.
\end{split}
\end{equation}
Then, by the Cauchy integral theorem, for any two integers $k_1$ and $k_2$ such that $0\leq k_1<k_2\leq \mathbf{k}$, we have
\begin{equation}\label{Eq ye4evrfnwli-3}
\begin{split}
\mathbf{F}_{k_1}+\mathbf{J}_{k_2}-\mathbf{J}_{k_1}=\mathbf{G}_{k_{2}}-\mathbf{G}_{k_{1}}+\mathbf{F}_{k_2}.
\end{split}
\end{equation}
In particular, when $k_1=0$ and $k_2=\mathbf{k}$, we see that the value of $\mathbf{J}_\mathbf{k}$ is related to the values of $\mathbf{F}_0$, $\mathbf{F}_\mathbf{k}$ and $\mathbf{G}_\mathbf{k}$.

Moreover, for the integer $\mathbf{k}$, we may write $\mathbf{v}=\mathbf{v}(x,y)= 2\mathbf{k}\pi+\zeta$ with a number $\zeta\in[-\pi/2,3\pi/2)$. By the definition of $\mathbf{v}$, there is a curve $\Psi_{\mathbf{k},\zeta}$ on which
\begin{equation*}%\label{Eq 4giubjb-new-1}
\begin{split}
z=s+it, \quad \mathbf{v}(s,t)=2k\pi+\zeta, \quad s\in[x_0,\infty).
\end{split}
\end{equation*}
By similar arguments as before, we may suppose that the curve $\Omega$ intersects with the curve $\Psi_{\mathbf{k},\zeta}$ at the unique point $z_{\mathbf{k},\zeta}=x_{\mathbf{k},\zeta}+iy_{\mathbf{k},\zeta}$. Denote by $L_{\mathbf{k},\zeta}$ the part of $\Psi_{\mathbf{k},\zeta}$ between the point $z_{\mathbf{k},\zeta}$ and the point $z$ and by $\Omega_{\mathbf{k},\zeta}$ the part of $\Omega$ between the two points $z_{\mathbf{k}}$ and $z_{\mathbf{k},\zeta}$ respectively. We integrate $e^{-U(z)}$ along the curve $L_{\mathbf{k},\zeta}$ and the curve $\Omega_{\mathbf{k},\zeta}$ and denote
\begin{equation}\label{Eq 4giubjb-new-2}
\begin{split}
\mathbf{F}_{\mathbf{k},\zeta}=\int_{L_{\mathbf{k},\zeta}}e^{-U(z)}dz
\end{split}
\end{equation}
and
\begin{equation}\label{Eq 4giubjb-new-3}
\begin{split}
\mathbf{G}_{\mathbf{k},\zeta}=\int_{\Omega_{\mathbf{k},\zeta}}e^{-U(z)}dz.
\end{split}
\end{equation}
Let $V_{\mathbf{k},\zeta}$ be the part of $V$ between the two points $\mathbf{z}_\mathbf{k}$ and $z$ and denote
\begin{equation}\label{Eq 4giubjb-new-4}
\begin{split}
\mathbf{J}_{\mathbf{k},\zeta}=\int_{V_{\mathbf{k},\zeta}}e^{-U(z)}dz.
\end{split}
\end{equation}
Then, by the Cauchy integral theorem, we have
\begin{equation}\label{Eq 4giubjb-new-5}
\begin{split}
\mathbf{F}_{\mathbf{k}}+\mathbf{J}_{\mathbf{k},\zeta}=\mathbf{G}_{\mathbf{k},\zeta}+\mathbf{F}_{\mathbf{k},\zeta}.
\end{split}
\end{equation}
We see that the value of $\mathbf{J}_{\mathbf{k},\zeta}$ is related to the values of $\mathbf{F}_\mathbf{k}$, $\mathbf{F}_{\mathbf{k},\zeta}$ and $\mathbf{G}_{\mathbf{k},\zeta}$. By the above discussions, below we estimate $\mathbf{F}_{k}$, $\mathbf{G}_k$, $\mathbf{F}_{\mathbf{k},\zeta}$ and $\mathbf{G}_{\mathbf{k},\zeta}$, respectively.

\subsection{Estimates for $\mathbf{F}_{k}$, $\mathbf{G}_k$, $\mathbf{F}_{\mathbf{k},\zeta}$ and $\mathbf{G}_{\mathbf{k},\zeta}$}\label{Part III}

First, we estimate $\mathbf{F}_{k}$ defined in \eqref{Eq ye4evrfnwli-1} for each integer $k=0,1,\cdots,\mathbf{k}$. Since $L_k$ has the initial point $z_k=x_k+iy_k$ and the end point $\mathbf{z}_k=x+i\mathbf{y}_k$, along the curve $L_k$ we may write $\mathbf{F}_k$ in \eqref{Eq ye4evrfnwli-1} as
\begin{equation}\label{Eqcnzgsjhh}
\begin{split}
\mathbf{F}_k=\int_{x_k}^{x}e^{-e^{\tilde{\mathbf{u}}}}ds+i\int_{x_k}^{x}e^{-e^{\tilde{\mathbf{u}}}}dt=\int_{x_k}^{x}e^{-e^{\tilde{\mathbf{u}}}}ds+i\int_{x_k}^{x}e^{-e^{\tilde{\mathbf{u}}}}\frac{dt}{ds}ds.
\end{split}
\end{equation}
Along each curve $L_k$, we take the first derivative on both sides of the equation $\mathbf{v}(s,t)=2k\pi$ with respect to $s$ and obtain \eqref{Eq 6aren;f} and \eqref{Eq 6aren;f-1}. Since the two estimates in \eqref{Eq 77 a1 fu1aojr-3hilu-2giu-1} and \eqref{Eq 77 a1 fu1aojr-3hilu-2giu-2} hold along $\Omega$ and $\Phi$, this implies that $\tilde{\mathbf{u}}_t^2+\tilde{\mathbf{v}}_t^2= [1+\epsilon(t)]|nz^{n-1}|^2$ uniformly for all $z=s+it\in L_k^{l}$ and all $k=0,1,\cdots,\mathbf{k}$ such that $L_k^{l}$ is the part of $L_k$ between $\Omega$ and $\Phi$. For $z=s+it\in L_k^{r}$ and all $k=0,1,\cdots$ such that $L_k^{r}$ is the part of $L_k$ between $\Phi$ and $V$, we may replace the quantity $\epsilon(y)$ in \eqref{Eq 77 a1 fu1aojr-3hilu-2giu-1} and \eqref{Eq 77 a1 fu1aojr-3hilu-2giu-2} by $\epsilon(x)$ and also obtain $\tilde{\mathbf{u}}_t^2+\tilde{\mathbf{v}}_t^2=[1+\epsilon(s)]|nz^{n-1}|^2$. We conclude that there is a positive constant $\bar{\omega}>0$ such that $\tilde{\mathbf{u}}_t^2+\tilde{\mathbf{v}}_t^2\geq \bar{\omega}^2$ for all $z=s+it\in L_k$ and all $k=0,1,\cdots,\mathbf{k}$. Now, along the curve $L_k$, we take the first derivative of $\tilde{\mathbf{u}}$ with respect to $s$ and obtain
\begin{equation*}%\label{Eq qgagre-4}
\begin{split}
\frac{d\tilde{\mathbf{u}}}{ds}=\tilde{\mathbf{u}}_s+\tilde{\mathbf{u}}_t\frac{dt}{ds}.
\end{split}
\end{equation*}
By the Cauchy--Riemann equations we have $\mathbf{u}_x=\mathbf{v}_y$. Thus, together with the equation in \eqref{Eq 6aren;f-1}, we have
\begin{equation}\label{Eq qgagfqqa}
\begin{split}
\frac{d\tilde{\mathbf{u}}}{ds}=\frac{\tilde{\mathbf{u}}^2_t+\tilde{\mathbf{v}}^2_t}{\tilde{\mathbf{v}}_t}.
\end{split}
\end{equation}
Moreover, as noted before, for any $z\in A_{\omega}$ we may write the function $U(z)$ in \eqref{Eq 63hil} as
\begin{equation*}%\label{Eq 43}
\begin{split}
U(z)=\int_{z_0}^zS(t)e^{P(t)}dt=e^{\mathbf{u}+i\mathbf{v}}
\end{split}
\end{equation*}
and it is elementary to show that
\begin{equation*}%\label{Eq 43}
\begin{split}
\left|\frac{U'(z)}{U(z)}\right|=\left|\mathbf{u}_y+i\mathbf{v}_y\right|=\sqrt{\mathbf{u}_y^2+\mathbf{v}_y^2}.
\end{split}
\end{equation*}
Note that $\mathbf{u}^2_y+\mathbf{v}^2_y\geq |\mathbf{v}_y|^2$. Thus by \eqref{Eq qgagfqqa} and previous discussions we have $|d\tilde{\mathbf{u}}/ds|\geq \bar{\omega}$ for all $z=s+it\in L_k$ and all $k=0,1,\cdots,\mathbf{k}$. Then we have
\begin{equation}\label{Eqcnzgset-4}
\begin{split} \left|\int_{x_k}^{x}e^{-e^{\tilde{\mathbf{u}}}}ds\right|=\int_{\mathbf{u}(x_k,y_k)}^{\mathbf{u}(x,\mathbf{y}_k)}\left|e^{-e^{\tilde{\mathbf{u}}}}\frac{ds}{d\tilde{\mathbf{u}}}\right|d\tilde{\mathbf{u}}\leq \frac{1}{\bar{\omega}}\int_{\mathbf{u}(x_k,y_k)}^{\mathbf{u}(x,\mathbf{y}_k)}e^{-e^{\tilde{\mathbf{u}}}}d\tilde{\mathbf{u}}.
\end{split}
\end{equation}
When $n=1$, we see that $\tilde{\mathbf{u}}=s+\epsilon(s)$ along each curve $L_k$. When $n\geq 2$, by \eqref{Eq 77 a1 fu1aojr-3hilu-3} and \eqref{Eq 6aren;f-1} we see that $t$ decreases as $s\to\infty$ along each curve $L_k$ and, moreover, by the estimates in \eqref{Eq 77 a1 fu1aojr-3hilu-2giu-1}, \eqref{Eq 77 a1 fu1aojr-3hilu-2giu-2} and \eqref{Eq 66vfiytdyst-1} we see that $\tilde{\mathbf{u}}_s=\tilde{\mathbf{v}}_t>0$ and $\tilde{\mathbf{u}}_t<0$ along each curve $L_k$. In particular, we have the estimate for $\mathbf{u}$ along the ray $\Phi$ in \eqref{Eq 66vfiytdyst}. By replacing $y=x\tan\theta$ in \eqref{Eq 66vfiytdyst}, we see that $\tilde{\mathbf{u}}$ increases as $z=s+it\to\infty$ along each curve $L_k$. Thus, the value of $\mathbf{u}(s,\mathbf{y}_{k})$ is always increasing and tends to $\infty$ as $s\to\infty$ along each curve $L_k$ in both of the two cases $n=1$ and $n\geq 2$. Since $e^{\mathbf{u}(x_k,y_k)}=\omega$ for all $k=0,1,\cdots,\mathbf{k}$, we may use similar arguments as in \eqref{Eq 4nalr-1} to deduce from \eqref{Eqcnzgset-4} that
\begin{equation*}%\label{Eq 43}
\begin{split}
\int_{x_k}^{x}e^{-e^{\tilde{\mathbf{u}}}}ds=d_{k,1}+O\left(\mathbf{u}(x,\mathbf{y}_k)e^{-e^{\mathbf{u}(x,\mathbf{y}_k)}}\right)=d_{k,1}+o(1), \quad x\to\infty
\end{split}
\end{equation*}
for some real constant $d_{k,1}$. For the last integral in \eqref{Eqcnzgsjhh}, we note that
\begin{equation}\label{Eq vfudt}
\begin{split}
\frac{dt}{ds}\frac{ds}{d\tilde{\mathbf{u}}}=\frac{\tilde{\mathbf{u}}_t}{\tilde{\mathbf{v}}_t}\frac{\tilde{\mathbf{v}}_t}{\tilde{\mathbf{u}}^2_t+\tilde{\mathbf{v}}^2_t}
=\frac{\tilde{\mathbf{u}}_t}{\tilde{\mathbf{u}}^2_t+\tilde{\mathbf{v}}_t^2}.
\end{split}
\end{equation}
Then, by similar arguments as above together with \eqref{Eq vfudt}, we get
\begin{equation}\label{Eqcnzgset-4-dyt}
\begin{split}
\left|\int_{x_k}^{x}e^{-e^{\tilde{\mathbf{u}}}}\frac{dt}{ds}ds\right|=\int_{\mathbf{u}(x_k,y_k)}^{\mathbf{u}(x,\mathbf{y}_k)}\left|e^{-e^{\tilde{\mathbf{u}}}}\frac{dt}{ds}\frac{ds}{d\tilde{\mathbf{u}}}\right|d\tilde{\mathbf{u}}\leq \frac{1}{\bar{\omega}}\int_{\mathbf{u}(x_k,y_k)}^{\mathbf{u}(x,\mathbf{y}_k)}e^{-e^{\tilde{\mathbf{u}}}}d\tilde{\mathbf{u}}
\end{split}
\end{equation}
and thus
\begin{equation*}%\label{Eq 43}
\begin{split}
\int_{x_k}^{x}e^{-e^{\tilde{\mathbf{u}}}}\frac{dt}{ds}ds=d_{k,2}+O\left(\mathbf{u}(x,\mathbf{y}_k)e^{-e^{\mathbf{u}(x,\mathbf{y}_k)}}\right)=d_{k,2}+o(1), \quad x\to\infty
\end{split}
\end{equation*}
for some real constant $d_{k,2}$. By the definition of $\mathbf{F}_k$ in \eqref{Eq ye4evrfnwli-1} and the expression of $\mathbf{u}$ along the ray $\Phi$ in \eqref{Eq 66vfiytdyst}, we conclude that
\begin{equation}\label{Eqcnzgset-4gui}
\begin{split}
\mathbf{F}_k=d_k+O\left(\mathbf{u}(x,\mathbf{y}_k)e^{-e^{\mathbf{u}(x,\mathbf{y}_k)}}\right)=d_{k}+o(1), \quad x\to\infty
\end{split}
\end{equation}
for some constant $d_k$ along each curve $L_k$. Moreover, since $e^{\mathbf{u}(x_k,y_k)}=\omega$ and $\tilde{\mathbf{u}}_t^2+\tilde{\mathbf{v}}_t^2\geq \bar{\omega}^2$ for all $z=s+it\in L_k$ and all $k=0,1,\cdots,\mathbf{k}$, we see from the inequalities in \eqref{Eqcnzgset-4} and \eqref{Eqcnzgset-4-dyt} that $|d_k|$ is actually uniformly bounded for all integers $k\geq 0$.

Second, we estimate $\mathbf{F}_{\mathbf{k},\zeta}$ defined in \eqref{Eq 4giubjb-new-2} for the case when $\zeta\in[\pi/2+\varepsilon,3\pi/2-\varepsilon]$. Since $L_{\mathbf{k},\zeta}$ has the initial point $z_{\mathbf{k},\zeta}=x_{\mathbf{k},\zeta}+iy_{\mathbf{k},\zeta}$ and the end point $z=x+iy$, along the curve $L_{\mathbf{k},\zeta}$ we may write $\mathbf{F}_{\mathbf{k},\zeta}$ in \eqref{Eq 4giubjb-new-2} as
\begin{equation*}%\label{Eq 4giubjb-new-6}
\begin{split}
\mathbf{F}_{\mathbf{k},\zeta}
&=\int_{x_{\mathbf{k},\zeta}}^{x}e^{-e^{\tilde{\mathbf{u}}}\cos\zeta}\cos\left(-e^{\tilde{\mathbf{u}}}\sin\zeta\right)ds+i\int_{x_{\mathbf{k},\zeta}}^{x}e^{-e^{\tilde{\mathbf{u}}}\cos\zeta}\sin\left(-e^{\tilde{\mathbf{u}}}\sin\zeta\right)\frac{dt}{ds}ds.
\end{split}
\end{equation*}
Along the curve $L_{\mathbf{k},\zeta}$, we take the first derivative on both sides of the equation $\mathbf{v}(s,t)=2k\pi+\zeta$ with respect to $s$ and obtain \eqref{Eq 6aren;f} and \eqref{Eq 6aren;f-1}. Then by \eqref{Eq qgagfqqa} we see that $d\tilde{\mathbf{u}}/ds>0$ and thus $\tilde{\mathbf{u}}$ is increasing as $z\to\infty$ along the curve $L_{\mathbf{k},\zeta}$. Since $\cos\tilde{\mathbf{v}}=\cos\zeta<0$, by \eqref{Eq 6aren;f-1} together with the estimates in \eqref{Eq 77 a1 fu1aojr-3hilu-2} and \eqref{Eq 77 a1 fu1aojr-3hilu-3} for $\mathbf{u}_y/\mathbf{v}_y$ and the relation in \eqref{Eq 6aren;f-1}, it is easy to see that
\begin{equation}\label{Eq 4giubjb-new-8}
\begin{split}
\left|\text{Im}(F_{\mathbf{k},\zeta})\right|&=\left|\int_{x_{\mathbf{k},\zeta}}^{x}e^{-e^{\tilde{\mathbf{u}}}\cos\zeta}\sin\left(-e^{\tilde{\mathbf{u}}}\cos\zeta\right)\frac{dt}{ds}ds\right|\leq \kappa[1+\epsilon(x)]xe^{-e^{\mathbf{u}}\cos\zeta}
\end{split}
\end{equation}
and
\begin{equation}\label{Eq 4giubjb-new-9}
\begin{split}
\left|\text{Re}(\mathbf{F}_{\mathbf{k},\zeta})\right|=\left|\int_{x_{\mathbf{k},\zeta}}^{x}e^{-e^{\tilde{\mathbf{u}}}\cos\zeta}\cos\left(-e^{\tilde{\mathbf{u}}}\cos\zeta\right)ds\right|\leq xe^{-e^{\mathbf{u}}\cos\zeta}.
\end{split}
\end{equation}

Third, we estimate $\mathbf{G}_{\mathbf{k}}$ defined in \eqref{Eq ye4evrfnwli-1-1}. Since the curve $\Omega_{k}$ has the initial point $z_0=x_0+iy_0$ and the end point $z_k=x_k+iy_k$, we have
\begin{equation}\label{Eq ye4evr-pre}
\begin{split}
\mathbf{G}_{\mathbf{k}}=\sum_{k=1}^{\mathbf{k}}(\mathbf{G}_{k}-\mathbf{G}_{k-1}).
\end{split}
\end{equation}
Moreover, along the curve $\Omega_{k}$, $k=0,1,\cdots,\mathbf{k}$, we may write $\mathbf{G}_{k}$ in \eqref{Eq ye4evrfnwli-1-1} as
\begin{equation}\label{Eq ye4evr}
\begin{split}
\mathbf{G}_{k}=\int_{y_0}^{y_{k}}e^{-\omega\cos\tilde{\mathbf{v}}}e^{-i\sin \omega\tilde{\mathbf{v}}}(ds+idt)=\text{Re}(\mathbf{G}_{k})+i\text{Im}(\mathbf{G}_{k}),
\end{split}
\end{equation}
where
\begin{equation*}%\label{Eq ye4evr-1}
\begin{split}
\text{Re}(\mathbf{G}_{k})&=\int_{y_0}^{y_{k}}\left[e^{-\omega\cos\tilde{\mathbf{v}}}\cos(\omega\sin \tilde{\mathbf{v}})ds+e^{-\omega\cos\tilde{\mathbf{v}}}\sin(\omega\sin \tilde{\mathbf{v}})dt\right],\\
\text{Im}(\mathbf{G}_{k})&=\int_{y_0}^{y_{k}}\left[e^{-\omega\cos\tilde{\mathbf{v}}}\cos(\omega\sin \tilde{\mathbf{v}})dt-e^{-\omega\cos\tilde{\mathbf{v}}}\sin(\omega\sin \tilde{\mathbf{v}})ds\right].
\end{split}
\end{equation*}
By the definition of $\Omega$, $e^{\mathbf{u}}=\omega$ for some positive constant $\omega$ for $z\in \Omega$. Then we see that $x=\epsilon(y)$ when $n=1$ and $y=[1+\epsilon(y)][\tan(\pi/2n)]x$ when $n\geq 2$. By taking the first derivative on both sides of the equation $e^{\tilde{\mathbf{u}}}=\omega$ with respect to $t$ along the curve $\Omega$, we have
\begin{equation*}
\begin{split}
\tilde{\mathbf{u}}_s\frac{ds}{dt}+\tilde{\mathbf{u}}_t=0.
\end{split}
\end{equation*}
Since $\mathbf{u}$ and $\mathbf{v}$ are the real and imaginary parts of an analytic function, we have by the Cauchy--Riemann equations that $\mathbf{u}_x=\mathbf{v}_y$. It follows from the above equation that
\begin{equation}\label{Eq qgagre}
\begin{split}
\frac{ds}{dt}=-\frac{\tilde{\mathbf{u}}_t}{\tilde{\mathbf{v}}_t}.
\end{split}
\end{equation}
Along the curve $\Omega$, we have the two estimates in \eqref{Eq 77 a1 fu1aojr-3hilu-2} and \eqref{Eq 77 a1 fu1aojr-3hilu-3} for $\mathbf{u}_y/\mathbf{v}_y$. In particular, $ds/dt=\epsilon(t)$ when $n=1$. Then it follows by the Cauchy--Riemann equations that $\mathbf{v}_x=-\mathbf{u}_y$ and so, together with the relation in \eqref{Eq qgagre},
\begin{equation}\label{Eq qgagre-1}
\begin{split}
\frac{d\tilde{\mathbf{v}}}{dt}=\tilde{\mathbf{v}}_s\frac{ds}{dt}+\tilde{\mathbf{v}}_t=\frac{\tilde{\mathbf{u}}^2_t+\tilde{\mathbf{v}}^2_t}{\tilde{\mathbf{v}}_t}.
\end{split}
\end{equation}
Recall the notations $\kappa$ and $\kappa_1$ in \eqref{Eq 77 notation}. Then by the relations \eqref{Eq 77 a1 fu1aojr-3hilu-2} and \eqref{Eq 77 a1 fu1aojr-3hilu-3} and also the expressions for $\mathbf{v}_y$ in \eqref{Eq 77 a1 fu1aojr-3hilu-1-fu}, we may write uniformly as
\begin{equation}\label{Eq qgagre-4}
\begin{split}
\frac{dt}{d\tilde{\mathbf{v}}}=\frac{1}{\tilde{\mathbf{v}}_t\left(1+\frac{\tilde{\mathbf{u}}^2_t}{\tilde{\mathbf{v}}^2_t}\right)}=\frac{1+\epsilon(t)}{\tilde{\mathbf{v}}_t(1+\kappa^2)}
=\frac{1+\epsilon(t)}{\kappa_1(1+\kappa^2)t^{n-1}}.
\end{split}
\end{equation}

Here we shall mainly look at $\text{Im}(\mathbf{G}_{\mathbf{k}})$. All the arguments below used to estimate $\text{Im}(\mathbf{G}_{\mathbf{k}})$ also apply to $\text{Re}(\mathbf{G}_{\mathbf{k}})$. We write $s=s(t)$. Then, by \eqref{Eq ye4evr-pre} and \eqref{Eq ye4evr}, we have
\begin{equation}\label{Eq ye4evr-2}
\begin{split}
\text{Im}(\mathbf{G}_{\mathbf{k}})=\sum_{k=1}^{\mathbf{k}}\text{Im}(\mathbf{G}_{k}-\mathbf{G}_{k-1}),
\end{split}
\end{equation}
where
\begin{equation}\label{Eq ye4evr-3}
\begin{split}
\text{Im}(\mathbf{G}_{k}-\mathbf{G}_{k-1})=\mathbf{G}_{k,1}-\mathbf{G}_{k,2}
\end{split}
\end{equation}
and
\begin{equation*}%\label{Eq ye4evr-buk}
\begin{split}
\mathbf{G}_{k,1}&=\int_{y_{k-1}}^{y_{k}}e^{-\omega\cos\tilde{\mathbf{v}}}\cos(\omega\sin \tilde{\mathbf{v}})dt=\int_{2(k-1)\pi}^{2k\pi}e^{-\omega\cos\tilde{\mathbf{v}}}\cos(\omega\sin \tilde{\mathbf{v}})\frac{dt}{d\tilde{\mathbf{v}}}d\tilde{\mathbf{v}},\\
\mathbf{G}_{k,2}&=\int_{y_{k-1}}^{y_{k}}e^{-\omega\cos\tilde{\mathbf{v}}}\sin(\omega\sin \tilde{\mathbf{v}})ds=\int_{2(k-1)\pi}^{2k\pi}e^{-\omega\cos\tilde{\mathbf{v}}}\sin(\omega\sin \tilde{\mathbf{v}})\frac{ds}{dt}\frac{dt}{d\tilde{\mathbf{v}}}d\tilde{\mathbf{v}}.
\end{split}
\end{equation*}
To estimate $\mathbf{G}_{k,1}$, we let $E_k=[2(k-1)\pi, 2k\pi]$. Then, for each $k=1,2,\cdots,\mathbf{k}$, we divide the set $E_k$ into two parts: $E_k=E_{k,1}\cup E_{k,2}$, where
\begin{equation*}%\label{Eq 70-cor4}
\begin{split}
E_{k,1}&=\left\{\tilde{\mathbf{v}}\in E_k: \cos\left(\omega\sin\tilde{\mathbf{v}}\right)\geq 0\right\},\\
E_{k,2}&=\left\{\tilde{\mathbf{v}}\in E_k: \cos\left(\omega\sin\tilde{\mathbf{v}}\right)<0\right\}.
\end{split}
\end{equation*}
It follows that
\begin{equation*}%\label{Eq ye4evr-4}
\begin{split}
\mathbf{G}_{k,1}&=\int_{E_{k,1}}e^{-\omega\cos\tilde{\mathbf{v}}}\cos(\omega\sin \tilde{\mathbf{v}})\frac{dt}{d\tilde{\mathbf{v}}}d\tilde{\mathbf{v}}+\int_{E_{k,2}}e^{-\omega\cos\tilde{\mathbf{v}}}\cos(\omega\sin \tilde{\mathbf{v}})\frac{dt}{d\tilde{\mathbf{v}}}d\tilde{\mathbf{v}}.
\end{split}
\end{equation*}
Let $y_{k,i}\in[y_{k-1},y_{k}]$, $i=1,2,\cdots$ be any point. Denote
$\mathbf{v}^{k,i}_y=\kappa_1(1+\kappa^2)y_{k,i}^{n-1}$ and  $\mathbf{v}^{k}_y=\kappa_1(1+\kappa^2)y_{k}^{n-1}$ for simplicity. We see that $\mathbf{v}^{k,i}_y=[1+\epsilon(y_k)]\mathbf{v}^{k,j}_y$ for any two points $y_{k,i},y_{k,j}\in[y_{k-1},y_{k}]$. By \eqref{Eq qgagre-1}, \eqref{Eq qgagre-4} and the first mean value theorem of integral together with the first identity in \eqref{Eq 70-cor4}, when $y_k$ is large, we have
\begin{equation*}%\label{Eq ye4evr-5}
\begin{split}
\mathbf{G}_{k,1}&=\frac{1+\epsilon(y_k)}{\mathbf{v}^{k,1}_y}\int_{E_{k,1}}e^{-\omega\cos\tilde{\mathbf{v}}}\cos(\omega\sin \tilde{\mathbf{v}})d\tilde{\mathbf{v}}+\frac{1+\epsilon(y_k)}{\mathbf{v}^{k,2}_y}\int_{E_{k,2}}e^{-\omega\cos\tilde{\mathbf{v}}}\cos(\omega\sin \tilde{\mathbf{v}})d\tilde{\mathbf{v}}\\
&=\frac{1}{\mathbf{v}^{k,1}_y}\int_{2(k-1)\pi}^{2k\pi}e^{-\omega\cos\tilde{\mathbf{v}}}\cos(\omega\sin \tilde{\mathbf{v}})d\tilde{\mathbf{v}}+\frac{\epsilon(y_k)\cdot O(1)}{\mathbf{v}^{k,1}_y} =\frac{1+\epsilon(y_k)}{\mathbf{v}^{k}_y}\cdot 2\pi
\end{split}
\end{equation*}
for some $y_{k,1},y_{k,2}\in[y_{k-1},y_{k}]$. To estimate $\mathbf{G}_{k,2}$, for each $k=1,2,\cdots,\mathbf{k}$, we divide the set $E_k=[2(k-1)\pi, 2k\pi]$ into two parts: $E_k=E_{k,3}\cup E_{k,4}$, where
\begin{equation*}%\label{Eq ye4evr-7}
\begin{split}
E_{k,3}&=\left\{\tilde{\mathbf{v}}\in E_k: \sin\left(\omega\sin\tilde{\mathbf{v}}\right)\geq 0\right\},\\
E_{k,4}&=\left\{\tilde{\mathbf{v}}\in E_k: \sin\left(\omega\sin\tilde{\mathbf{v}}\right)<0\right\}.
\end{split}
\end{equation*}
Then we may write
\begin{equation*}%\label{Eq ye4evr-8}
\begin{split}
\mathbf{G}_{k,2}
&=\int_{E_{k,3}}e^{-\omega\cos\tilde{\mathbf{v}}}\sin(\omega\sin \tilde{\mathbf{v}})\frac{d}{dt}\frac{dt}{d\tilde{\mathbf{v}}}d\tilde{\mathbf{v}}+\int_{E_{k,4}}e^{-\omega\cos\mathbf{v}}\sin(\omega\sin \tilde{\mathbf{v}})\frac{ds}{dt}\frac{dt}{d\tilde{\mathbf{v}}}d\tilde{\mathbf{v}}
\end{split}
\end{equation*}
and, again, it follows by the first mean value theorem of integral together with the second identity in \eqref{Eq 70-cor4} and also the relations in \eqref{Eq 77 a1 fu1aojr-3hilu-2}, \eqref{Eq 77 a1 fu1aojr-3hilu-3}, \eqref{Eq qgagre} and \eqref{Eq qgagre-4} that
\begin{equation*}%\label{Eq ye4evr-9}
\begin{split}
\mathbf{G}_{k,2}&=\frac{[1+\epsilon(y_k)]\kappa}{\mathbf{v}^{k,4}_y}\int_{E_{k,3}}e^{-\omega\cos\tilde{\mathbf{v}}}\sin(\omega\sin \tilde{\mathbf{v}})d\tilde{\mathbf{v}}+\frac{[1+\epsilon(y_k)]\kappa}{\mathbf{v}^{k,5}_y}\int_{E_{k,4}}e^{-\omega\cos\tilde{\mathbf{v}}}\sin(\omega\sin \tilde{\mathbf{v}})d\tilde{\mathbf{v}}\\
&=\frac{\kappa}{\mathbf{v}^{k,4}_y}\int_{2(k-1)\pi}^{2k\pi}e^{-\omega\cos\tilde{\mathbf{v}}}\sin(\omega\sin \tilde{\mathbf{v}})d\tilde{\mathbf{v}}+\frac{\epsilon(y_k)\kappa\cdot O(1)}{\mathbf{v}^{k,4}_y} =\frac{\epsilon(y_k)}{\mathbf{v}^{k}_y}
\end{split}
\end{equation*}
for some $y_{k,3},y_{k,4}\in[y_{k-1},y_{k}]$. Therefore, for each $k=1,2,\cdots,\mathbf{k}$, by the above estimates for $\mathbf{G}_{k,1}$ and $\mathbf{G}_{k,2}$ we always have
\begin{equation}\label{Eq 7giuivftrd}
\begin{split}
\text{Im}(\mathbf{G}_{k}-\mathbf{G}_{k-1})=\mathbf{G}_{k,1}-\mathbf{G}_{k,2}=\frac{[1+\epsilon(y_k)]2\pi}{\mathbf{v}^{k}_y}=\frac{[1+\epsilon(y_k)]2\pi}{\kappa_1(1+\kappa^2)y_k^{n-1}}.
\end{split}
\end{equation}
Note that $y_k<y_{\mathbf{k}}$ for $k=1,\cdots,\mathbf{k}-1$ and thus $\epsilon(y_k)=\epsilon(y_{\mathbf{k}})$ for $k=1,\cdots,\mathbf{k}-1$. Then, by summarising the above calculations for $\text{Im}(\mathbf{G}_{k}-\mathbf{G}_{k-1})$ together with the relations in \eqref{Eq ye4evr-2} and \eqref{Eq ye4evr-3}, we have
\begin{equation}\label{Eq 70-corvjh}
\begin{split}
\text{Im}(\mathbf{G}_{\mathbf{k}})=\sum_{k=1}^{\mathbf{k}}\frac{[1+\epsilon(y_k)]2\pi}{\kappa_1(1+\kappa^2)y_k^{n-1}}
\geq \frac{[1+\epsilon(y_\mathbf{k})]2\pi\mathbf{k}}{\kappa_1(1+\kappa^2)y_\mathbf{k}^{n-1}}.
\end{split}
\end{equation}
Here we point out the relation between $y_k$ and $\mathbf{y}_{k}$. Recall that the curve $\mathbf{V}$ intersects with the curve $L_k$ at the point $\mathbf{z}_k=x+i\mathbf{y}_k$ and the curve $\Omega$ intersects with the curve $L_k$ at the points $z_k=x_k+iy_k$. Then, by the expression of $\mathbf{v}$ along the ray $\Phi$ and the curve $\Omega$ in \eqref{Eq 66vfiytdystbk} and the definition of $L_k$, we have
\begin{equation*}%\label{Eq 66vfiytdystgiy}
\begin{split}
[1+\epsilon(y_k)]y_k^n=\frac{\sin^n\vartheta}{\sin^n\theta}\frac{\sin n\theta}{\sin n\vartheta}\mathbf{y}_k^n[1+\epsilon(\mathbf{y}_k)],
\end{split}
\end{equation*}
which implies that
\begin{equation}\label{Eq 66vfiytdystgiy-1}
\begin{split}
y_k=\frac{\sin\vartheta}{\sin\theta}\sqrt[n]{\frac{\sin n\theta}{\sin n\vartheta}}[1+\epsilon(\mathbf{y}_{k})]\mathbf{y}_k.
\end{split}
\end{equation}
Note that $\mathbf{v}(x,\mathbf{y}_{\mathbf{k}})=2\pi\mathbf{k}[1+\epsilon(\mathbf{y}_{\mathbf{k}})]$ and also that $\mathbf{v}_y(x,y_{\mathbf{k}})=\kappa_1y_{\mathbf{k}}^{n-1}[1+\epsilon(y_{\mathbf{k}})]$ along the curve $\Omega$ by \eqref{Eq 77 a1 fu1aojr-3hilu-1-fu}. Recall the notations $\kappa$ and $\kappa_1$ in \eqref{Eq 77 notation}. Then by the estimate for $\mathbf{v}$ in \eqref{Eq 66vfiytdystbk} along the ray $\Phi$, we have from \eqref{Eq 70-corvjh} that
\begin{equation}\label{Eq 70-corafeaf4}
\begin{split}
\text{Im}(\mathbf{G}_{\mathbf{k}})\geq\frac{\sqrt[n]{\sin n\theta}\cos(n-1)\vartheta}{n\sin\theta}(\sin n\vartheta)^{\frac{n-1}{n}}[1+\epsilon(\mathbf{y}_{\mathbf{k}})]\mathbf{y}_{\mathbf{k}}
\end{split}
\end{equation}
in both of the two cases $n=1$ and $n\geq 2$. Recall that $\vartheta=[1+\epsilon(y)+O(y^{-n}\ln y)]\pi/2n$ along the curve $\Omega$ and thus $\vartheta=[1+\epsilon(\mathbf{y}_{\mathbf{k}})+O(\mathbf{y}_{\mathbf{k}}^{-n}\ln \mathbf{y}_{\mathbf{k}})]\pi/2n$ by the relation in \eqref{Eq 66vfiytdystgiy-1} and thus
\begin{equation}\label{Eq 70-blkgl-1}
\begin{split}
\sin n\vartheta=1+\epsilon(\mathbf{y}_{\mathbf{k}})+O\left(\mathbf{y}_{\mathbf{k}}^{-n}\ln \mathbf{y}_{\mathbf{k}}\right), \quad \mathbf{k}\to\infty.
\end{split}
\end{equation}
On the other hand, using exactly the same arguments, by \eqref{Eq ye4evr-pre} and \eqref{Eq ye4evr} together with the two derivatives in \eqref{Eq qgagre} and \eqref{Eq qgagre-1} we may also obtain
\begin{equation}\label{Eq 7giuivftrd-1}
\begin{split}
\text{Re}(\mathbf{G}_{k}-\mathbf{G}_{k-1})=\frac{\kappa[1+\epsilon(y_k)]2\pi}{\mathbf{v}^{k}_y}=\frac{\kappa[1+\epsilon(y_k)]2\pi}{\kappa_1(1+\kappa^2)y_k^{n-1}},
\end{split}
\end{equation}
which then yields
\begin{equation}\label{Eq 70-corafeaf4giy}
\begin{split}
\text{Re}(\mathbf{G}_{\mathbf{k}})\geq\frac{\sqrt[n]{\sin n\theta}\sin(n-1)\vartheta}{n\sin\theta}(\sin n\vartheta)^{\frac{n-1}{n}}[1+\epsilon(\mathbf{y}_{\mathbf{k}})]\mathbf{y}_{\mathbf{k}}
\end{split}
\end{equation}
in both of the two cases $n=1$ and $n\geq 2$. In particular, when $n=1$, by \eqref{Eq 77 a1 fu1aojr-3hilu-2} we will obtain that $\text{Re}(\mathbf{G}_{\mathbf{k}})=\epsilon(\mathbf{y}_{\mathbf{k}})\mathbf{y}_{\mathbf{k}}=O(1)$ from the above calculations.

Finally, we estimate $\mathbf{G}_{\mathbf{k},\zeta}$ defined in \eqref{Eq 4giubjb-new-3}. Since the curve $\Omega_{\mathbf{k},\zeta}$ has the initial point $z_\mathbf{k}=x_\mathbf{k}+iy_\mathbf{k}$ and the end point $z_{\mathbf{k},\zeta}=x_{\mathbf{k},\zeta}+iy_{\mathbf{k},\zeta}$, we may write $\mathbf{G}_{\mathbf{k},\zeta}$ in \eqref{Eq 4giubjb-new-2} as
\begin{equation*}%\label{Eq 4giubjb-new-10}
\begin{split}
\mathbf{G}_{\mathbf{k},\zeta}=\int_{y_\mathbf{k}}^{y_{\mathbf{k},\zeta}}e^{-\omega\cos\tilde{\mathbf{v}}}e^{-i\sin \omega\tilde{\mathbf{v}}}(ds+idt)=\text{Re}(\mathbf{G}_{\mathbf{k},\zeta})+i\text{Im}(\mathbf{G}_{\mathbf{k},\zeta}),
\end{split}
\end{equation*}
where
\begin{equation*}%\label{Eq ye4evr-1}
\begin{split}
\text{Re}(\mathbf{G}_{\mathbf{k},\zeta})&=\int_{y_\mathbf{k}}^{y_{\mathbf{k},\zeta}}\left[e^{-\omega\cos\tilde{\mathbf{v}}}\cos(\omega\sin \tilde{\mathbf{v}})ds+e^{-\omega\cos\tilde{\mathbf{v}}}\sin(\omega\sin \tilde{\mathbf{v}})dt\right],\\
\text{Im}(\mathbf{G}_{\mathbf{k},\zeta})&=\int_{y_\mathbf{k}}^{y_{\mathbf{k},\zeta}}\left[e^{-\omega\cos\tilde{\mathbf{v}}}\cos(\omega\sin \tilde{\mathbf{v}})dt-e^{-\omega\cos\tilde{\mathbf{v}}}\sin(\omega\sin \tilde{\mathbf{v}})ds\right].
\end{split}
\end{equation*}
By similar arguments as before, it is easy to see that
\begin{equation}\label{Eq 4giubjb-new-11}
\begin{split}
\left|\text{Im}(\mathbf{G}_{\mathbf{k},\zeta})\right|\leq \frac{[1+\epsilon(y_\mathbf{k})]2\pi}{\mathbf{v}^{\mathbf{k}}_y}=\frac{[1+\epsilon(y_\mathbf{k})]2\pi}{\kappa_1(1+\kappa^2)y_\mathbf{k}^{n-1}}e^{\omega}
\end{split}
\end{equation}
and
\begin{equation}\label{Eq 4giubjb-new-12}
\begin{split}
\left|\text{Re}(\mathbf{G}_{\mathbf{k},\zeta})\right|\leq\frac{[1+\epsilon(y_\mathbf{k})]2\pi}{\mathbf{v}^{\mathbf{k}}_y}=\frac{\kappa[1+\epsilon(y_\mathbf{k})]2\pi}{\kappa_1(1+\kappa^2)y_\mathbf{k}^{n-1}}e^{\omega}
\end{split}
\end{equation}
in both of the two cases $n=1$ and $n\geq 2$. Thus $\text{Im}(\mathbf{G}_{\mathbf{k},\zeta})=O(1)$ and $\text{Re}(\mathbf{G}_{\mathbf{k},\zeta})=O(1)$. With the above estimates for $\mathbf{F}_{k}$, $\mathbf{G}_k$, $\mathbf{F}_{\mathbf{k},\zeta}$ and $\mathbf{G}_{\mathbf{k},\zeta}$, now we are ready to estimate $\mathbf{H}$ in \eqref{Eq 76fiyt-1} using the relations in \eqref{Eq ye4evrfnwli-3} and \eqref{Eq 4giubjb-new-5}.

\subsection{Completion of the proof}\label{Part IV}

For clarity, in this subsection we shall use the notations $\bar{\mathbf{u}}=\mathbf{u}(x,t)$ and $\bar{\mathbf{v}}=\mathbf{v}(x,t)$ which differ from $\mathbf{u}=\mathbf{u}(x,y)$ and $\mathbf{v}=\mathbf{v}(x,y)$ by the variable $t$. For the point $z=x+iy\in \Phi$, recall that $\mathbf{v}=\mathbf{v}(x,y)=2\mathbf{k}\pi+\zeta$ with an integer $\mathbf{k}$ and a number $\zeta\in[-\pi/2,3\pi/2)$. We choose $\mathbf{k}$ to be large. Then we may write $\mathbf{H}_1$ and $\mathbf{H}_2$ in \eqref{Eq 76fiyt-2} as
\begin{equation}\label{Eqcnzgset}
\begin{split}
\mathbf{H}_1=\mathbf{I}_{11}+\mathbf{I}_{12},\quad \mathbf{H}_2=\mathbf{I}_{21}+\mathbf{I}_{22},
\end{split}
\end{equation}
where
\begin{equation}\label{Eqcnzgset-1}
\begin{split}
\mathbf{I}_{11}&=\int_{\mathbf{y}_0}^{\mathbf{y}_{\mathbf{k}}}e^{-e^{\bar{\mathbf{u}}}\cos \bar{\mathbf{v}}}\cos\left(-e^{\bar{\mathbf{u}}}\sin\bar{\mathbf{v}}\right)dt,\\
\mathbf{I}_{12}&=\int_{\mathbf{y}_{\mathbf{k}}}^{y}e^{-e^{\bar{\mathbf{u}}}\cos\bar{\mathbf{v}}}\cos\left(-e^{\bar{\mathbf{u}}}\sin\bar{\mathbf{v}}\right)dt
\end{split}
\end{equation}
and
\begin{equation}\label{Eqcnzgset-qari-2}
\begin{split}
\mathbf{I}_{21}&=\int_{\mathbf{y}_0}^{\mathbf{y}_{\mathbf{k}}}e^{-e^{\bar{\mathbf{u}}}\cos \bar{\mathbf{v}}}\sin\left(-e^{\bar{\mathbf{u}}}\sin\bar{\mathbf{v}}\right)dt,\\
\mathbf{I}_{22}&=\int_{\mathbf{y}_{\mathbf{k}}}^{y}e^{-e^{\bar{\mathbf{u}}}\cos\bar{\mathbf{v}}}\sin\left(-e^{\bar{\mathbf{u}}}\sin\bar{\mathbf{v}}\right)dt.
\end{split}
\end{equation}
Below we estimate $\mathbf{I}_{11}$, $\mathbf{I}_{12}$, $\mathbf{I}_{21}$ and $\mathbf{I}_{22}$ respectively.

Consider first $\mathbf{v}=2\mathbf{k}\pi+\zeta$ with a number $\zeta\in[-\pi/2+\varepsilon,\pi/2-\varepsilon]$. For $\mathbf{I}_{11}$, we see by the definition of $\mathbf{J}_k$ in \eqref{Eq ye4evrfnwli-2} that $\mathbf{I}_{11}=\text{Im}(\mathbf{J}_{k})$. By taking the imaginary parts on both sides of equation \eqref{Eq ye4evrfnwli-3}, we find
\begin{equation*}%\label{Eq gofkgyor4-1}
\begin{split}
\text{Im}(\mathbf{J}_{k_2}-\mathbf{J}_{k_1})=\text{Im}(\mathbf{G}_{k_{2}}-\mathbf{G}_{k_{1}})+\text{Im}(\mathbf{F}_{k_2})-\text{Im}(\mathbf{F}_{k_1}).
\end{split}
\end{equation*}
In particular, when $k_1=0$ and $k_2=\mathbf{k}$, by the estimates in \eqref{Eqcnzgset-4gui}, we have
\begin{equation}\label{Eqcnzgset-5}
\begin{split}
\mathbf{I}_{11}=\text{Im}(\mathbf{J}_{\mathbf{k}})=\text{Im}(\mathbf{G}_{\mathbf{k}})+\text{Im}(d_{\mathbf{k}}-d_{0})+o(1), \quad x\to\infty.
\end{split}
\end{equation}
Then, by the inequality in \eqref{Eq 70-corafeaf4}, we obtain from \eqref{Eqcnzgset-5} that
\begin{equation}\label{Eqcnzgset-6}
\begin{split}
\left|\mathbf{I}_{11}\right|\geq\frac{\sqrt[n]{\sin n\theta}\cos(n-1)\vartheta}{n\sin\theta}(\sin n\vartheta)^{\frac{n-1}{n}}[1+\epsilon(\mathbf{y}_{\mathbf{k}})]\mathbf{y}_{\mathbf{k}}.
\end{split}
\end{equation}
For $\mathbf{I}_{12}$, since $\mathbf{v}_y=\iota_1y^{n-1}[1+\epsilon(y)]$ with $\iota_1=n\cos (n-1)\theta/(\sin\theta)^{n-1}$ along the ray $\Phi$ by \eqref{Eq 77 a1 fu1aojr-3hilu-2giu-2} and since $d\bar{\mathbf{v}}/dt=\bar{\mathbf{v}}_t$, by similar arguments as for estimating $\mathbf{G}_{k,1}$ and $\mathbf{G}_{k,2}$ in previous subsection, we have
\begin{equation}\label{Eqcnzgset-7}
\begin{split}
\left|\mathbf{I}_{12}\right|&=\left|\int_{2\mathbf{k}\pi}^{2\mathbf{k}\pi+\zeta}e^{-e^{\bar{\mathbf{u}}}\cos\bar{\mathbf{v}}}\cos\left(-e^{\bar{\mathbf{u}}}\sin\bar{\mathbf{v}}\right)\frac{dt}{d\bar{\mathbf{v}}}d\bar{\mathbf{v}}\right|\\
&\leq \left|\frac{1+\epsilon(\mathbf{y}_\mathbf{k})}{\iota_1\mathbf{y}_{\mathbf{k}}^{n-1}}\right|\left|\int_{2\mathbf{k}\pi}^{2\mathbf{k}\pi+\zeta}\left|e^{-e^{\bar{\mathbf{u}}}\cos\bar{\mathbf{v}}}\cos\left(-e^{\bar{\mathbf{u}}}\sin\bar{\mathbf{v}}\right)\right|d\bar{\mathbf{v}}\right|.
\end{split}
\end{equation}
When $n=1$, by the relation in \eqref{Eq 77 a1 fu1-a-ijg}, we have from \eqref{Eqcnzgset-7} that
\begin{equation}\label{Eqcnzgset-8}
\begin{split}
\left|\mathbf{I}_{12}\right|\leq \left|\frac{2\zeta}{\iota_1\mathbf{y}_{\mathbf{k}}^{n-1}} e^{-e^{\mathbf{u}+\epsilon(x)}\cos\zeta}\right|.
\end{split}
\end{equation}
When $n\geq 2$, by taking the first derivative of $e^{\bar{\mathbf{u}}}\cos\bar{\mathbf{v}}$ with respect to $\bar{\mathbf{v}}$ we find that
\begin{equation}\label{Eqcnzgset-8bou}
\begin{split}
\frac{d(e^{\bar{\mathbf{u}}}\cos\bar{\mathbf{v}})}{d\bar{\mathbf{v}}}=e^{\bar{\mathbf{u}}}\left(\frac{\bar{\mathbf{u}}_t}{\bar{\mathbf{v}}_t}\cos\bar{\mathbf{v}}-\sin\bar{\mathbf{v}}\right)=e^{\bar{\mathbf{u}}}\left(\frac{\bar{\mathbf{u}}_t}{\bar{\mathbf{v}}_t}-\frac{\sin\bar{\mathbf{v}}}{\cos\bar{\mathbf{v}}}\right)\cos\bar{\mathbf{v}}.
\end{split}
\end{equation}
Note that $\mathbf{u}_y/\mathbf{v}_y<0$ by the estimate in \eqref{Eq 77 a1 fu1aojr-3hilu-3}. Thus, by \eqref{Eqcnzgset-8bou}, we see that, when $\zeta\in[0,\pi/2-\varepsilon]$, $-e^{\bar{\mathbf{u}}}\cos\bar{\mathbf{v}}$ is increasing as $\bar{\mathbf{v}}$ varies from $2\mathbf{k}\pi$ to $2\mathbf{k}\pi+\zeta$; when $\zeta\in[-\pi/2+\varepsilon,0]$, $-e^{\bar{\mathbf{u}}}\cos\bar{\mathbf{v}}$ may be decreasing first and then increases as $\bar{\mathbf{v}}$ varies from $2\mathbf{k}\pi+\zeta$ to $2\mathbf{k}\pi$. In the first case, we always have $-e^{\bar{\mathbf{u}}}\cos\bar{\mathbf{v}}\leq -e^{\mathbf{u}}\cos\zeta$; in the latter case, we have $-e^{\bar{\mathbf{u}}}\cos\bar{\mathbf{v}}\leq \max\{-e^{\mathbf{u}}\cos\zeta,-e^{\mathbf{u}(x,\mathbf{y}_{\mathbf{k}})}\}$. By the relation in \eqref{Eq 77 a1 fu1-a-ijg} we have
\begin{equation*}%\label{Eq 7gnlk-1}
\begin{split}
e^{\mathbf{u}(x,\mathbf{y}_{\mathbf{k}})}=e^{\mathbf{u}(x,y)}e^{\mathbf{u}(x,\mathbf{y}_{\mathbf{k}})-\mathbf{u}(x,y)}=e^{\mathbf{u}}e^{-\zeta[1+\epsilon(x)]\frac{\mathbf{u}_y}{\mathbf{v}_y}}.
\end{split}
\end{equation*}
Denote
\begin{equation*}%\label{Eq 7gnlk-1}
\begin{split}
\phi_1(x)=\log\left(e^{-\zeta[1+\epsilon(x)]\frac{\mathbf{u}_y}{\mathbf{v}_y}}\frac{1}{\cos\zeta}\right).
\end{split}
\end{equation*}
Note that $\phi_1(x)=O(1)$ and the quantity $O(1)$ is dependent on $\varepsilon$. Denote $\phi_2(x)=\max\{0,\phi_1(x)\}$. Then we have from \eqref{Eqcnzgset-7} that
\begin{equation}\label{Eqcnzgset-8bou-1}
\begin{split}
\left|\mathbf{I}_{12}\right|\leq  \left|\frac{2\zeta}{\iota_1\mathbf{y}_{\mathbf{k}}^{n-1}} e^{-e^{\mathbf{u}+\phi_2(x)}\cos\zeta}\right|
\end{split}
\end{equation}
when $n\geq 2$. Now, by the expression of $\mathbf{u}$ along the ray $\Phi$ in \eqref{Eq 66vfiytdyst}, we see that
\begin{equation}\label{Eq 7gvctdnhli}
\begin{split}
\mathbf{u}=\frac{\cos n\theta}{(\cos\theta)^n}x^n\left[1+\epsilon(x)+O(x^{-n}\ln x)\right].
\end{split}
\end{equation}
Obviously, $\mathbf{v}_y\geq 1$ and hence $\iota_1\mathbf{y}_{\mathbf{k}+1}^{n-1}\geq 1$ along the ray $\Phi$ when $y$ is large. Thus, by \eqref{Eq 7gvctdnhli}, we see from \eqref{Eqcnzgset-8} and \eqref{Eqcnzgset-8bou-1} that $\mathbf{I}_{12}\to 0$ as $x\to\infty$ and satisfies $\mathbf{I}_{12}=\epsilon(\mathbf{y}_{\mathbf{k}})$ in both of the two cases $n=1$ and $n\geq 2$. Then it follows from \eqref{Eqcnzgset}, \eqref{Eqcnzgset-1} and \eqref{Eqcnzgset-6} that
\begin{equation}\label{Eqcnzgset-9}
\begin{split}
\left|\mathbf{H}_1\right|\geq\frac{\sqrt[n]{\sin n\theta}\cos(n-1)\vartheta}{n\sin\theta}(\sin n\vartheta)^{\frac{n-1}{n}}[1+\epsilon(\mathbf{y}_{\mathbf{k}})]\mathbf{y}_{\mathbf{k}}-\epsilon(\mathbf{y}_{\mathbf{k}}).
\end{split}
\end{equation}
For $\mathbf{I}_{21}$, we see by the definition of $\mathbf{J}_k$ in \eqref{Eq ye4evrfnwli-2} that $\mathbf{I}_{21}=\text{Re}(\mathbf{J}_{k})$. Then we may take the real parts on both sides of equation \eqref{Eq ye4evrfnwli-3} and obtain
\begin{equation}\label{Eqcnzgset-5-fu}
\begin{split}
\mathbf{I}_{21}=\text{Re}(\mathbf{J}_{\mathbf{k}})=\text{Re}(\mathbf{G}_{\mathbf{k}})+\text{Re}(d_{\mathbf{k}}-d_{0})+o(1), \quad x\to\infty.
\end{split}
\end{equation}
Similarly as before, we obtain from the estimates in \eqref{Eqcnzgset-4gui} and \eqref{Eq 70-corafeaf4giy} that
\begin{equation}\label{Eqcnzgset-6-1}
\begin{split}
\left|\mathbf{I}_{21}\right|\geq\frac{\sqrt[n]{\sin n\theta}\sin(n-1)\vartheta}{n\sin\theta}(\sin n\vartheta)^{\frac{n-1}{n}}[1+\epsilon(\mathbf{y}_{\mathbf{k}})]\mathbf{y}_{\mathbf{k}}+O(1)
\end{split}
\end{equation}
in both of the two cases $n=1$ and $n \geq 2$. Note that $|\mathbf{I}_{21}|=O(1)$ when $n=1$. Also, by similar arguments as for the estimate in \eqref{Eqcnzgset-8} and \eqref{Eqcnzgset-8bou-1}, we have
\begin{equation}\label{Eqcnzgset-8-1}
\begin{split}
\left|\mathbf{I}_{22}\right|\leq \left| \frac{2\zeta}{\kappa_1\mathbf{y}_{\mathbf{k}}^{n-1}}e^{-e^{\mathbf{u}+\epsilon(x)}\cos\zeta}\right|
\end{split}
\end{equation}
when $n=1$ and
\begin{equation}\label{Eqcnzgset-8-2}
\begin{split}
\left|\mathbf{I}_{22}\right|\leq \left| \frac{2\zeta}{\kappa_1\mathbf{y}_{\mathbf{k}}^{n-1}}e^{-e^{\mathbf{u}+\phi_3(x)}\cos\zeta}\right|
\end{split}
\end{equation}
when $n\geq 2$, for a function $\phi_3(x)$ such that $|\phi_3(x)|=O(1)$. By \eqref{Eq 7gvctdnhli}, we see from \eqref{Eqcnzgset-8-1} and \eqref{Eqcnzgset-8-2} that $\mathbf{I}_{22}\to 0$ as $x\to\infty$ and $\mathbf{I}_{22}=\epsilon(\mathbf{y}_{\mathbf{k}})$ in both of the two cases $n=1$ and $n\geq 2$. Then it follows from \eqref{Eqcnzgset}, \eqref{Eqcnzgset-qari-2} and \eqref{Eqcnzgset-6-1} that
\begin{equation}\label{Eqcnzgset-10}
\begin{split}
\left|\mathbf{H}_2\right|\geq\frac{\sqrt[n]{\sin n\theta}\sin(n-1)\vartheta}{n\sin\theta}(\sin n\vartheta)^{\frac{n-1}{n}}[1+\epsilon(\mathbf{y}_{\mathbf{k}})]\mathbf{y}_{\mathbf{k}}+O(1).
\end{split}
\end{equation}
Recall that $y=x\tan\theta$ along the curve $\Phi$. Then, by the definitions of $\mathbf{H}$ in \eqref{Eq 76fiyt-1} and $\mathbf{H}_1$ and $\mathbf{H}_2$ in \eqref{Eq 76fiyt-2}, in both of the two cases $n=1$ and $n\geq 2$, we have from \eqref{Eq 70-blkgl-1}, \eqref{Eqcnzgset-9} and \eqref{Eqcnzgset-10} that
\begin{equation}\label{Eqcnzgset-11}
\begin{split}
\left|\mathbf{H}\right|\geq\frac{\sqrt[n]{\sin n\theta}}{n\cos\theta}\left[1+\epsilon(x)+O(x^{-n}\ln x)\right]x.
\end{split}
\end{equation}
Therefore, by \eqref{Eq 76fiyt}, \eqref{Eq 76fiyt-1}, \eqref{Eqcnzgset-4gui} and \eqref{Eqcnzgset-11} together with the relation in \eqref{Eq 7gvctdnhli}, we see that, for each integration constant $c$, the solution $f(z)$ in \eqref{Eq 63} satisfies the inequality in \eqref{Eq 3} for $z=x+iy=re^{i\theta}\in \Phi$ such that $\mathbf{v}=2\mathbf{k}\pi+\zeta$ with a large integer $\mathbf{k}$ and a number $\zeta\in[-\pi/2+\varepsilon,\pi/2-\varepsilon]$.

Consider next $\mathbf{v}=2\mathbf{k}\pi+\zeta$ with a number $\zeta\in[\pi/2+\varepsilon,3\pi/2-\varepsilon]$. By looking at the process of estimating $\text{Im}(\mathbf{G}_{k}-\mathbf{G}_{k-1})$ in \eqref{Eq 7giuivftrd} and $\text{Re}(\mathbf{G}_{k}-\mathbf{G}_{k-1})$ in \eqref{Eq 7giuivftrd-1} together with the relations in \eqref{Eq 66vfiytdystgiy-1} and in \eqref{Eqcnzgset-5} and \eqref{Eqcnzgset-5-fu}, we easily find that $\mathbf{I}_{11}$ and $\mathbf{I}_{21}$ in \eqref{Eqcnzgset} both satisfy $\mathbf{I}_{11}=O(\mathbf{y}_{\mathbf{k}}^n)=O(y^n)$ and $\mathbf{I}_{21}=O(\mathbf{y}_{\mathbf{k}}^n)=O(y^n)$. For $\mathbf{I}_{12}$, by taking the imaginary parts on both sides of equation \eqref{Eq 4giubjb-new-5}, we find
\begin{equation*}%\label{Eq 4giubjb-new-13}
\begin{split}
\text{Im}\left(\mathbf{F}_{\mathbf{k}}\right)+\text{Im}\left(\mathbf{J}_{\mathbf{k},\zeta}\right)=\text{Im}\left(\mathbf{G}_{\mathbf{k},\zeta}\right)+\text{Im}\left(\mathbf{F}_{\mathbf{k},\zeta}\right)
\end{split}
\end{equation*}
and, by the definition of $\mathbf{J}_{\mathbf{k},\zeta}$ in \eqref{Eq 4giubjb-new-4}, thus
\begin{equation}\label{Eq 4giubjb-new-14}
\begin{split}
\mathbf{I}_{12}=\text{Im}\left(\mathbf{J}_{\mathbf{k},\zeta}\right)=\text{Im}\left(\mathbf{G}_{\mathbf{k},\zeta}\right)+\text{Im}\left(\mathbf{F}_{\mathbf{k},\zeta}\right)-\text{Im}\left(\mathbf{F}_{\mathbf{k}}\right).
\end{split}
\end{equation}
Note that $\cos\zeta<0$. Then by the estimate for $\mathbf{F}_k$ in \eqref{Eqcnzgset-4gui}, the estimate for $\text{Im}(\mathbf{F}_{\mathbf{k},\zeta})$ in \eqref{Eq 4giubjb-new-8} and the estimate for $\text{Im}(\mathbf{G}_{\mathbf{k},\zeta})$ in \eqref{Eq 4giubjb-new-11}, we have from \eqref{Eq 4giubjb-new-14} that
\begin{equation}\label{Eq 4giubjb-new-15}
\begin{split}
\left|\mathbf{I}_{12}\right|\leq \kappa[1+\epsilon(x)]xe^{-e^{\mathbf{u}}\cos\zeta}+O(1).
\end{split}
\end{equation}
Similarly, for $\mathbf{I}_{22}$, by taking the real parts on both sides of equation \eqref{Eq 4giubjb-new-5} together with the estimate for $\mathbf{F}_k$ in \eqref{Eqcnzgset-4gui}, the estimate for $\text{Re}(\mathbf{F}_{\mathbf{k},\zeta})$ in \eqref{Eq 4giubjb-new-9} and the estimate for $\text{Re}(\mathbf{G}_{\mathbf{k},\zeta})$ in \eqref{Eq 4giubjb-new-12}, we finally have
\begin{equation}\label{Eq 4giubjb-new-16}
\begin{split}
\left|\mathbf{I}_{22}\right|\leq xe^{-e^{\mathbf{u}}\cos\zeta}+O(1).
\end{split}
\end{equation}
Note that $|e^{U(z)}|=e^{e^{\mathbf{u}}\cos\zeta}$ and $\cos\zeta<0$. Then, since $\mathbf{I}_{11}=O(y^n)$ and $\mathbf{I}_{21}=O(y^n)$, by \eqref{Eq 76fiyt}, \eqref{Eq 76fiyt-1}, \eqref{Eqcnzgset}, \eqref{Eqcnzgset-1}, \eqref{Eqcnzgset-qari-2} and the inequalities in \eqref{Eq 4giubjb-new-15} and \eqref{Eq 4giubjb-new-16}, we see that, for each integration constant $c$, the solution $f(z)$ in \eqref{Eq 63} satisfies $|f(z)|=O(x)$ for $z=x+iy\in\Phi$ such that $\mathbf{v}=2\mathbf{k}\pi+\zeta$ with a large integer $\mathbf{k}$ and a number $\zeta\in[\pi/2+\varepsilon,3\pi/2-\varepsilon]$.

On the basis of the previous estimates for $|f(z)|$ of $f(z)$ in \eqref{Eq 63} along the ray $\Phi$, we define a sequence $\{r_k\}$ in the way that $r_k\sin\theta=\tilde{x}_k$ and $r_k\cos\theta=\tilde{y}_k$ and $\mathbf{v}(\tilde{x}_{2l},\tilde{y}_{2l})=2(2l+\mathbf{k})\pi-\pi/2+\varepsilon$ if $k=2l$ and $\mathbf{v}(\tilde{x}_{2l+1},\tilde{y}_{2l+1})=2(2l+\mathbf{k})\pi+\pi/2-\varepsilon$. Then the inequality in \eqref{Eq 3} holds for all $z=x+iy=re^{i\theta}$ such that $r\in E=\cup_{l=0}^{\infty}[r_{2l},r_{2l+1}]$. Moreover, by the estimate for $\mathbf{v}$ in \eqref{Eq 66vfiytdystbk}, we have
\begin{equation}\label{Eq 66gragbbsgfaf}
\begin{split}
\mathbf{v}(\tilde{x}_{k},\tilde{y}_{k})&=\frac{\sin n\theta}{(\sin\theta)^n}\tilde{y}_{k}^n[1+\epsilon(\tilde{y}_{k})].
%,\\
%\mathbf{v}(\tilde{x}_{2l+1},\tilde{y}_{2l+1})&=\frac{\sin n\theta}{(\sin\theta)^n}\tilde{y}_{2l+1}^n[1+\epsilon(\tilde{y}_{2l+1})].
\end{split}
\end{equation}
Obviously, $\tilde{y}_{k}\to\infty$ as $k\to\infty$. Denote $\Delta x=\tilde{x}_{2l+1}-\tilde{x}_{2l}$ and $\Delta y=\tilde{y}_{2l+1}-\tilde{y}_{2l}$ for simplicity. Since $\mathbf{v}(\tilde{x}_{2l+1},\tilde{y}_{2l+1})-\mathbf{v}(\tilde{x}_{2l},\tilde{y}_{2l})=\pi-2\varepsilon$, we deduce from \eqref{Eq 66gragbbsgfaf} that
\begin{equation*}%\label{Eq 70-cor4}
\begin{split}
\tilde{y}_{2l+1}[1+\epsilon(\tilde{y}_{2l+1})]-\tilde{y}_{2l}[1+\epsilon(\tilde{y}_{2l})]=O(1).
\end{split}
\end{equation*}
Thus $\Delta y=O(1)$ and also $\Delta x=O(1)$. By Lagrange's mean value theorem together with the Cauchy--Riemann equations, there are two constants $\delta_3,\delta_4\in(0,1)$ such that
\begin{equation*}%\label{Eq 70-cor4}
\begin{split}
\pi-2\varepsilon&=\mathbf{v}(\tilde{x}_{2l+1},\tilde{y}_{2l+1})-\mathbf{v}(\tilde{x}_{2l},\tilde{y}_{2l})\\
&=\left[\mathbf{v}_x(\tilde{x}_{2l}+\delta_3\Delta x,\tilde{y}_{2l}+\delta_4\Delta y)\frac{1}{\tan\theta}+\mathbf{v}_y(\tilde{x}_{2l}+\delta_3\Delta x,\tilde{y}_{2l}+\delta_4\Delta y)\right]\Delta y\\
&=\mathbf{v}_y(\tilde{x}_{2l}+\delta_3\Delta x,\tilde{y}_{2l}+\delta_4\Delta y)\left[1-\frac{1}{\tan\theta}\frac{\mathbf{u}_y(\tilde{x}_{2l}+\delta_3\Delta x,\tilde{y}_{2l}+\delta_4\Delta y)}{\mathbf{v}_y(\tilde{x}_{2l}+\delta_3\Delta x,\tilde{y}_{2l}+\delta_4\Delta y)}\right]\Delta y.
\end{split}
\end{equation*}
Then, by the estimate for $\mathbf{v}_y$ in \eqref{Eq 77 a1 fu1aojr-3hilu-1-fu} and the two estimates for $\mathbf{u}_y/\mathbf{v}_y$ in \eqref{Eq 77 a1 fu1aojr-3hilu-2} and \eqref{Eq 77 a1 fu1aojr-3hilu-3} together with the equation in \eqref{Eq 66gragbbsgfaf}, we obtain
\begin{equation*}%\label{Eq 70-cor4}
\begin{split}
\Delta y=\frac{(\pi-2\varepsilon)(\sin n\theta)^{1-1/n}\tan\theta}{n[\tan\theta+\tan(n-1)\theta]\cos (n-1)\theta}\cdot\frac{1+\epsilon(\tilde{y}_{2l})}{[2(2l+\mathbf{k})\pi-\pi/2+\varepsilon]^{1-1/n}}
\end{split}
\end{equation*}
in both of the two cases $n=1$ and $n\geq 2$. Then the set $E=\cup_{l=0}^{\infty}[r_{2l},r_{2l+1}]$ has infinite logarithmic measure, i.e., $\int_{E}dr/r=\infty$. This completes the proof.

\section{A further lower bound for $|f(z)|$ of $f(z)$ in \eqref{Eq 4}}\label{Lower bound}

In this section, we continue to estimate the modulus $|f(z)|$ for the solution $f(z)$ in \eqref{Eq 4} along the ray $\Phi$ on which $z=re^{i\theta}$, $r\in[r_0,\infty)$. As a complement of the results in the introduction, we shall provide a lower bound for $|f(z)|$ for $z=x+iy\in \Phi$ such that $y=2\mathbf{k}\pi+\pi+\eta$ with the integer $\mathbf{k}$ and a number $\eta\in[-\pi/2+\varepsilon,\pi/2-\varepsilon]$. Recall the expression of $f(z)$ along the curve $L$ in \eqref{Eq 70-cbuy-0}. Since $|e^{e^z}|=e^{-e^x\cos\eta}$ and since $\int_{x_0}^{x}e^{-e^t}dt=d_0+o(1)$ for some constant $d_0$ by \eqref{Eq 4nalr-1}, it suffices to provide a lower bound for $|\mathbf{H}|$ of $\mathbf{H}$ in \eqref{Eq 70-cbuy}. We shall prove the following

\begin{theorem}\label{maintheorem2}
For $z=x+iy\in\Phi$ such that $y=2\mathbf{k}\pi+\pi+\eta$ with a large integer $\mathbf{k}>0$ and a number $\eta\in[-\pi/2+\varepsilon,\pi/2-\varepsilon]$, there is a constant $\xi=\xi(\varepsilon)>0$ dependent on $\varepsilon$ such that the function $\mathbf{H}$ in \eqref{Eq 70-cbuy} satisfies
\begin{equation}\label{Eq 3gouu}
\left|\mathbf{H}\right|>\xi \frac{e^{e^{x}\cos\eta}}{e^{x}\cos\eta}.
\end{equation}
\end{theorem}

\begin{proof}%[Proof of Theorem~\ref{maintheorem2}]

We use the notations in the introduction. Now, under the assumptions of Theorem~\ref{maintheorem2}, we may write $\mathbf{H}$ in \eqref{Eq 70-cbuy} as
\begin{equation}\label{Eq 70-cbuy-a-1}
\begin{split}
\mathbf{H}=\int_{0}^{y}e^{-\mathbf{r}\cos t}e^{-i\mathbf{r}\sin t}dt=\mathbf{H}_1+i\mathbf{H}_2,
\end{split}
\end{equation}
where $\mathbf{r}=e^x$ and
\begin{equation}\label{Eq 70-cbuy-a-2}
\begin{split}
\mathbf{H}_1=\int_{0}^{2\mathbf{k}\pi+\pi+\eta}e^{-\mathbf{r}\cos t}\cos(-\mathbf{r}\sin t)dt,\\
\mathbf{H}_2=\int_{0}^{2\mathbf{k}\pi+\pi+\eta}e^{-\mathbf{r}\cos t}\sin(-\mathbf{r}\sin t)dt.
\end{split}
\end{equation}
Then, by the two identities in \eqref{Eq 70-cor4} and the discussions in the introduction, it is easy to deduce that
\begin{equation}\label{Eq 70-cbuy-a-3}
\begin{split}
\mathbf{H}_1&=2(\mathbf{k}+1)\pi-\int_{0}^{\pi/2}e^{-\mathbf{r}\cos t}\cos(\mathbf{r}\sin t)dt-\int_{\eta}^{\pi/2}e^{\mathbf{r}\cos t}\cos(\mathbf{r}\sin t)dt,\\
\mathbf{H}_2&=-\int_{0}^{\pi/2}e^{-\mathbf{r}\cos t}\sin(\mathbf{r}\sin t)dt-\int_{\eta}^{\pi/2}e^{\mathbf{r}\cos t}\sin(\mathbf{r}\sin t)dt
\end{split}
\end{equation}
when $\eta\geq0$ and
\begin{equation}\label{Eq 70-cbuy-a-4}
\begin{split}
\mathbf{H}_1&=2\mathbf{k}\pi+\int_{0}^{\pi/2}e^{-\mathbf{r}\cos t}\cos(\mathbf{r}\sin t)dt+\int_{-\eta}^{\pi/2}e^{\mathbf{r}\cos t}\cos(\mathbf{r}\sin t)dt,\\
\mathbf{H}_2&=-\int_{0}^{\pi/2}e^{-\mathbf{r}\cos t}\sin(\mathbf{r}\sin t)dt-\int_{-\eta}^{\pi/2}e^{\mathbf{r}\cos t}\sin(\mathbf{r}\sin t)dt
\end{split}
\end{equation}
when $\eta<0$. Note that
$|\int_{0}^{\pi/2}e^{-\mathbf{r}\cos t}\cos(\mathbf{r}\sin t)dt|<\pi/2$ and
$|\int_{0}^{\pi/2}e^{-\mathbf{r}\cos t}\sin(\mathbf{r}\sin t)dt|<\pi/2$. Thus below we shall estimate the two quantities
\begin{equation}\label{Eq 70-cbuy-a-5}
\begin{split}
\mathbf{H}_{3}&=\int_{|\eta|}^{\pi/2}e^{\mathbf{r}\cos t}\cos(\mathbf{r}\sin t)dt,\\
\mathbf{H}_{4}&=\int_{|\eta|}^{\pi/2}e^{\mathbf{r}\cos t}\sin(\mathbf{r}\sin t)dt.
\end{split}
\end{equation}
Also note that, for any constant $\delta>0$ such that $\pi/2-\delta\in(|\eta|,\pi/2]$,
\begin{equation}\label{E qtwxaa}
\begin{split}
\left|\int_{\pi/2-\delta}^{\pi/2}e^{\mathbf{r}\cos t}\cos(\mathbf{r}\sin t)dt\right|&\leq \delta e^{\mathbf{r}\sin \delta}=\delta e^{\mathbf{r}(\sin \delta-\cos\eta)}e^{\mathbf{r}\cos \eta},\\
\left|\int_{\pi/2-\delta}^{\pi/2}e^{\mathbf{r}\cos t}\sin(\mathbf{r}\sin t)dt\right|&\leq \delta e^{\mathbf{r}\sin \delta}=\delta e^{\mathbf{r}(\sin \delta-\cos\eta)}e^{\mathbf{r}\cos \eta}.
\end{split}
\end{equation}
Thus it suffices to look at the two quantities
\begin{equation}\label{j;aljf;a}
\begin{split}
\mathbf{I}_1&=\int_{|\eta|}^{\pi/2-\delta}e^{\mathbf{r}\cos t}\cos(\mathbf{r}\sin t)dt,\\
\mathbf{I}_2&=\int_{|\eta|}^{\pi/2-\delta}e^{\mathbf{r}\cos t}\sin(\mathbf{r}\sin t)dt,
\end{split}
\end{equation}
where $\eta$ is a number such that
$\mathbf{r}\sin|\eta|=2k_\mathbf{m}\pi+\lambda$, $\lambda\in \left[0,2\pi\right]$
for an integer $k_{\mathbf{m}}\geq 0$.

We may always assume that the constant $\delta$ in \eqref{E qtwxaa} satisfies $\mathbf{r}\sin(\pi/2-\delta)=2k_{\mathbf{n}}\pi+2\pi+\lambda$ for some integer $k_{\mathbf{n}}$ and also that $\mathbf{r}^2\sin^2(\pi/2-\delta)\leq \mathbf{r}^2-1$. Under this assumption, we look at $\mathbf{I}_1$ in \eqref{j;aljf;a}. Set $\mathbf{r}\sin t=w$. Then $\mathbf{I}_1$ in \eqref{j;aljf;a} becomes
\begin{equation}\label{Eq 20}
\begin{split}
\mathbf{I}_1=\int_{\mathbf{r}\sin|\eta|}^{\mathbf{r}\sin(\pi/2-\delta)}\chi(w)\cos wdw,
\end{split}
\end{equation}
where
\begin{equation}\label{Eq 24a-fjhi}
\begin{split}
\chi(w)=\chi(w,\mathbf{r})=\frac{e^{\sqrt{\mathbf{r}^2-w^2}}}{\sqrt{\mathbf{r}^2-w^2}}.
\end{split}
\end{equation}
By the periodicity of $\sin w$, we easily deduce from equation \eqref{Eq 20} that
\begin{equation}\label{Eq 24hilu}
\begin{split}
\mathbf{I}_1=\int_{\lambda}^{\lambda+\pi}\left[\sum_{k=2k_{\mathbf{m}}}^{2k_{\mathbf{n}}+1}(-1)^{k}\chi(s+k\pi)\right]\cos sds.
\end{split}
\end{equation}
By taking the first derivative of $\chi(w)$ with respect to $w$, we find
\begin{equation*}%\label{Eq 24a-fj}
\begin{split}
\chi'(w)=\frac{-we^{\sqrt{\mathbf{r}^2-w^2}}}{\sqrt{\mathbf{r}^2-w^2}}\cdot\frac{\sqrt{\mathbf{r}^2-w^2}-1}{\mathbf{r}^2-w^2}.
\end{split}
\end{equation*}
We see that $\chi'(w)<0$ when $\mathbf{r}^2-\mathbf{r}^2\sin^2(\pi/2-\delta)\geq 1$. Thus the difference $\chi(s+k\pi)-\chi(s+(k+1)\pi)$ is always positive. The function $\mathbf{I}_2$ can be dealt with in a similar way. Below we fix a small constant $\tau>0$ and consider the two cases $|\eta|\in [\tau,\pi/2-\varepsilon]$ and $|\eta|\in[0,\tau]$ respectively.

\subsection{Part~I}

If $|\eta|\in[\tau,\pi/2-\varepsilon]$, we will estimate $\mathbf{I}_1$ when $\lambda\in[\pi/2,\pi]$ or $\lambda\in[3\pi/2,2\pi]$ and estimate $\mathbf{I}_2$ when $\lambda\in [0,\pi/2]$ or $\lambda\in [\pi,3\pi/2]$.

Look at $\mathbf{I}_1$ in \eqref{j;aljf;a} for the case when $\lambda\in[\pi/2,\pi]$. Denote $s_k=w/\mathbf{r}$ and $\tilde{s}_k=k\pi/\mathbf{r}$ for simplicity. Then, for the value $w=s+k\pi$ and each of the even integers $k=2k_{\mathbf{m}},\cdots,2k_{\mathbf{n}}$, by \eqref{Eq 24a-fjhi} we have
\begin{equation}\label{Eqohulh-fj-1}
\begin{split}
\frac{\chi(w+\pi)}{\chi(w)}&=\exp\left({\frac{-2\pi\tilde{s}_k-(2s+\pi)/\mathbf{r}}{\sqrt{1-s_{k+1}^2}+\sqrt{1-s_k^2}}}\right)
\sqrt{1+\frac{s_{k+1}^2-s_{k}^2}{1-s_{k+1}^2}}.
\end{split}
\end{equation}
Denote $\alpha_{k}=\tilde{s}_{k}/\sqrt{1-\tilde{s}_{k}^2}$ for simplicity. Since $\sin\tau\leq \tilde{s}_{k}\leq \cos\varepsilon$, we see that $\tan\tau\leq \alpha_{k}\leq 1/\tan\varepsilon$ uniformly for all the even integers $k=2k_{\mathbf{m}},\cdots,2k_{\mathbf{n}}$. Note that $s_{k+1}^2-s_k^2=[2s+(2k+1)\pi]\pi/\mathbf{r}^2=O(\mathbf{r}^{-1})$. Similarly, $s_{k}^2-\tilde{s}_k^2=O(\mathbf{r}^{-1})$ and $s_{k+1}^2-\tilde{s}_k^2=O(\mathbf{r}^{-1})$. Then, using the relations $e^t\sim 1+t$ as $t\to 0$ and $\sqrt{1+t}\sim 1+t/2$ as $t\to 0$, we find
\begin{equation}\label{Eqohulh-fj-2}
\begin{split}
\frac{\chi(w+\pi)}{\chi(w)}=e^{-\pi\alpha_{k}}\left[1+O\left(\frac{1}{\mathbf{r}}\right)\right]
\end{split}
\end{equation}
and also
\begin{equation}\label{Eqohulh-fj-2ho}
\begin{split}
\frac{\chi((k+1)\pi)}{\chi(k\pi)}=e^{-\pi\alpha_{k}}\left[1+O\left(\frac{1}{\mathbf{r}}\right)\right].
\end{split}
\end{equation}
Note that the quantities $O(1)$ in \eqref{Eqohulh-fj-2} and \eqref{Eqohulh-fj-2ho} are dependent on $\varepsilon$. Since $(s+k\pi)^2-(k\pi)^2=(s+2k\pi)s$, $\chi(w)/\chi(k\pi)$ has similar estimate, namely
\begin{equation}\label{Eqohulh-fj-3}
\begin{split}
\frac{\chi(w)}{\chi(k\pi)}=e^{-\alpha_{k}s}\left[1+O\left(\frac{1}{\mathbf{r}}\right)\right].
\end{split}
\end{equation}
By \eqref{Eqohulh-fj-2} and \eqref{Eqohulh-fj-2ho} it follows that
\begin{equation}\label{Eqohulh-fj-4}
\begin{split}
\frac{\chi(w)-\chi(w+\pi)}{\chi(k\pi)-\chi((k+1)\pi)}=\frac{\chi(w)}{\chi(k\pi)}\frac{1-\frac{\chi(w+\pi)}{\chi(w)}}{1-\frac{\chi((k+1)\pi)}{\chi(k\pi)}}=\frac{\chi(w)}{\chi(k\pi)}\left[1+O\left(\frac{1}{\mathbf{r}}\right)\right].
\end{split}
\end{equation}
Now, for the value $w=s+k\pi$ and each of the even integers $k=2k_{\mathbf{m}},\cdots,2k_{\mathbf{n}}$, it follows from \eqref{Eqohulh-fj-3} and \eqref{Eqohulh-fj-4} that
\begin{equation}\label{Eq giyji-fj}
\begin{split}
\int_{\lambda}^{\lambda+\pi}\left[\chi(w)-\chi(w+\pi)\right]\cos sds=\left[\chi(k\pi)-\chi((k+1)\pi)\right]\Lambda_{k,1},
\end{split}
\end{equation}
where
\begin{equation*}%\label{Eq giyji-fj}
\begin{split}
\Lambda_{k,1}=\int_{\lambda}^{\pi+\lambda}\left[1+O\left(\frac{1}{\mathbf{r}}\right)\right]e^{-\alpha_{k}s}\cos sds.
\end{split}
\end{equation*}
For any $\lambda\in[\pi/2,\pi]$, elementary calculation and estimation yield
\begin{equation}\label{Eq giyji-fjbku-j2}
\begin{split}
\Lambda_{k,1}=\frac{e^{-\alpha_{k}(\lambda+\pi)}+e^{-\alpha_{k}\lambda}}{1+\alpha_{k}^2}\left(\alpha_{k}\cos\lambda-\sin\lambda\right)+O\left(\frac{1}{\mathbf{r}}\right).
\end{split}
\end{equation}
Since $\sin \lambda>0$ and $\cos \lambda<0$ for any $\lambda\in[\pi/2,\pi]$ and $\tan\tau\leq \alpha_{k}\leq 1/\tan\varepsilon$ uniformly for all the even integers $k=2k_{\mathbf{m}},\cdots,2k_{\mathbf{n}}$, we see that $\Lambda_{k,1}$ is always negative and there is a constant $\xi_1=\xi_1(\tau,\varepsilon)$ dependent on $\tau$ and $\varepsilon$ such that $|\Lambda_{k,1}|\geq \xi_1$ uniformly for all the even integers $k=2k_{\mathbf{m}},\cdots,2k_{\mathbf{n}}$.

Recall that $\chi(s+k\pi)-\chi(s+(k+1)\pi)$ is always positive. By \eqref{Eq 24hilu}, we summarize the integrals in \eqref{Eq giyji-fj} for all the even integers $k=2k_{\mathbf{m}}\cdots,2k_{\mathbf{n}}$ and obtain that $\mathbf{I}_1$ is negative and
\begin{equation}\label{Eq 24a-fj}
\begin{split}
|\mathbf{I}_1|>\xi_1\sum_{k=2k_{\mathbf{m}}}^{2k_{\mathbf{n}}+1}(-1)^k\chi(k\pi)>\xi_1\frac{e^{\mathbf{r}\cos\eta}}{\mathbf{r}\cos\eta}\frac{\chi(2k_{\mathbf{m}}\pi)}{\chi(\mathbf{r}\sin|\eta|)}\left[1-\frac{\chi((2k_{\mathbf{m}}+1)\pi)}{\chi(2k_{\mathbf{m}}\pi)}\right].
\end{split}
\end{equation}
Together with the estimate in \eqref{Eqohulh-fj-2ho} and a similar estimate for $\chi(2k_{\mathbf{m}}\pi)/\chi(\mathbf{r}\sin|\eta|)$ as the one in \eqref{Eqohulh-fj-2}, we see from \eqref{Eq 24a-fj} that there is a constant $\xi_2=\xi_2(\tau,\varepsilon)>0$ dependent on $\tau$ and $\varepsilon$ such that
\begin{equation}\label{Eq 24a-fjhggouky}
\begin{split}
|\mathbf{I}_1|>\xi_1\xi_2\frac{e^{\mathbf{r}\cos\eta}}{\mathbf{r}\cos\eta}.
\end{split}
\end{equation}
By the estimate in \eqref{E qtwxaa} we have $|\mathbf{H}_{3}|\geq (1-\varepsilon)|\mathbf{I}_1|$. Thus the inequality in \eqref{Eq 3gouu} follows by the relations $|\mathbf{H}|\geq |\mathbf{H}_1|\geq (1-\varepsilon)|\mathbf{H}_{3}|$.

When $\lambda\in[3\pi/2,2\pi]$, we also obtain the estimate for $\Lambda_{k,1}$ in \eqref{Eq giyji-fjbku-j2}, which is now positive. Thus, we also have similar inequalities as those in \eqref{Eq 24a-fj} and \eqref{Eq 24a-fjhggouky}.

When $\lambda\in[0,\pi/2]$ or $\lambda\in[\pi,3\pi/2]$, we estimate $\mathbf{I}_2$ in \eqref{j;aljf;a} and derive a similar equation as the one in \eqref{Eq giyji-fj} with $\cos s$ there being replaced by $\sin s$ and $\Lambda_{k,1}$ there by
\begin{equation}\label{Eqhlonpre}
\begin{split}
\Lambda_{k,2}=\frac{e^{-\alpha_{k}(\lambda+\pi)}+e^{-\alpha_{k}\lambda}}{1+\alpha_{k}^2}\left(\alpha_{k}\sin\lambda+\cos\lambda\right)+O\left(\frac{1}{\mathbf{r}}\right).
\end{split}
\end{equation}
When $\lambda\in[0,\pi/2]$ or $\lambda\in[\pi,3\pi/2]$, $\Lambda_{k,2}$ is always negative or positive. The estimate in \eqref{Eqhlonpre} implies similar inequalities for $\mathbf{I}_2$ as those in \eqref{Eq 24a-fj} and \eqref{Eq 24a-fjhggouky}. By the estimate in \eqref{E qtwxaa} we have $|\mathbf{H}_{4}|\geq (1-\varepsilon)|\mathbf{I}_2|$. Thus the inequality in \eqref{Eq 3gouu} follows by the relations $|\mathbf{H}|\geq |\mathbf{H}_2|\geq (1-\varepsilon)|\mathbf{H}_{4}|$.

\subsection{Part~II}

If $|\eta|\in[0,\tau]$, we will estimate $\mathbf{I}_1$ in \eqref{j;aljf;a} when $\lambda\in[\pi/4,3\pi/4]$ or $\lambda\in[5\pi/4,7\pi/4]$ and estimate $\mathbf{I}_2$ in \eqref{j;aljf;a} when $\lambda\in[3\pi/4,5\pi/4]$ or $\lambda\in[7\pi/4,2\pi]\cup[0,\pi/4]$. We choose $\delta$ so that $w/\mathbf{r}\leq \sin 2\tau$ for all the even integers $k=2k_{\mathbf{m}},\cdots,2k_{\mathbf{n}}$.

Look at $\mathbf{I}_1$ in \eqref{j;aljf;a} for the case when $\lambda\in[\pi/4,3\pi/4]$. Now, using the relations $e^t\sim 1+t$ as $t\to 0$ and $\sqrt{1+t}\sim 1+t/2$ as $t\to 0$, we find that the quotient in \eqref{Eqohulh-fj-1}, for the value $w=s+k\pi$, takes the form
\begin{equation*}%\label{Eq gna;ire}
\begin{split}
\frac{\chi(w+\pi)}{\chi(w)}&=\exp\left(-{\frac{[2s+(2k+1)\pi]\pi}{2\mathbf{r}\sqrt{1-\tilde{s}_k^2}}}\right)\left[1+O\left(\frac{k+1}{\mathbf{r}^2}\right)\right].
\end{split}
\end{equation*}
Denote $\beta_k=[2s+(2k+1)\pi]\pi/2\mathbf{r}\sqrt{1-\tilde{s}_k^2}$ and $\gamma_k=[2s+(2k+1)\pi]\pi/2\mathbf{r}$ and also $\tilde{\beta}_k=(2k+1)\pi^2/2\mathbf{r}\sqrt{1-\tilde{s}_k^2}$ and $\tilde{\gamma}_k=(2k+1)\pi^2/2\mathbf{r}$ for simplicity. Recall that $\tilde{s}_k=k\pi/\mathbf{r}$. If $\tau$ is small, then using the relations $e^t\sim 1+t$ as $t\to 0$, we find
\begin{equation}\label{Eqohulh-fj-gde-1}
\begin{split}
\frac{\chi(w+\pi)}{\chi(w)}=\left[1-\beta_k+O(\beta_k^2)\right]\left[1+O\left(\frac{k+1}{\mathbf{r}^2}\right)\right]=1-\beta_k\left[1+O\left(\frac{k+1}{\mathbf{r}}\right)\right]
\end{split}
\end{equation}
and also
\begin{equation}\label{Eqohulh-fj-gde-2}
\begin{split}
\frac{\chi((k+1)\pi)}{\chi(k\pi)}=\left[1-\tilde{\beta}_k+O(\tilde{\beta}_k^2)\right]\left[1+O\left(\frac{k+1}{\mathbf{r}^2}\right)\right]=1-\tilde{\beta}_k\left[1+O\left(\frac{k+1}{\mathbf{r}}\right)\right].
\end{split}
\end{equation}
Since $(s+k\pi)^2-(k\pi)^2=(s+2k\pi)s$, $\chi(w)/\chi(k\pi)$ has similar estimate, namely
\begin{equation}\label{Eqohulh-fj-gde-3-a}
\begin{split}
\frac{\chi(w)}{\chi(k\pi)}=1+O\left(\frac{k+1}{\mathbf{r}}\right).
\end{split}
\end{equation}
By \eqref{Eqohulh-fj-gde-1} and \eqref{Eqohulh-fj-gde-2} together with the relation $\sqrt{1-t}\sim 1-t/2$ as $t\to 0$, it follows that
\begin{equation}\label{Eqohulh-fj-gde-3-b}
\begin{split}
\frac{\chi(w)-\chi(w+\pi)}{\chi(k\pi)-\chi((k+1)\pi)}=\frac{\chi(w)}{\chi(k\pi)}\frac{1-\frac{\chi(w+\pi)}{\chi(w)}}{1-\frac{\chi((k+1)\pi)}{\chi(k\pi)}}=\frac{\chi(w)}{\chi(k\pi)}\frac{\gamma_k}{\tilde{\gamma}_k}\left[1+O\left(\frac{k+1}{\mathbf{r}}\right)\right].
\end{split}
\end{equation}
Now, for the value $w=s+k\pi$ and each of the even integers $k=2k_{\mathbf{m}},\cdots,2k_{\mathbf{n}}$, it follows from \eqref{Eqohulh-fj-gde-3-a} and \eqref{Eqohulh-fj-gde-3-b} that
\begin{equation}\label{Eqohulh-fj-gde-3}
\begin{split}
\int_{\lambda}^{\lambda+\pi}\left[\chi(w)-\chi(w+\pi)\right]\cos sds=\left[\chi(k\pi)-\chi((k+1)\pi)\right]\Lambda_{k,3},
\end{split}
\end{equation}
where
\begin{equation*}%\label{Eqohulh-fj-gde-3}
\begin{split}
\Lambda_{k,3}=\int_{\lambda}^{\lambda+\pi}\left[1+O\left(\frac{k+1}{\mathbf{r}}\right)\right]\frac{\gamma_k}{\tilde{\gamma}_k}\cos sds.
\end{split}
\end{equation*}
For any $\lambda\in[\pi/4,3\pi/4]$, elementary calculation and estimation yield
\begin{equation}\label{Eqohulh-fj-gde-7}
\begin{split}
\Lambda_{k,3}=-2\sin\lambda-\frac{2}{(2k+1)\pi}\left[(2\lambda+\pi)\sin\lambda+2\cos\lambda\right]+O\left(\frac{k+1}{\mathbf{r}}\right).
\end{split}
\end{equation}
When $\lambda\in[\pi/4,3\pi/4]$ and $\tau$ is small, we see that $\Lambda_{k,3}$ is always negative and there is a constant $\xi_3=\xi_3(\varepsilon)$ dependent on $\varepsilon$ such that $|\Lambda_{k,3}|\geq \xi_3$ uniformly for all the even integers $k=2k_{\mathbf{m}},\cdots,2k_{\mathbf{n}}$.

Recall that $\chi(s+k\pi)-\chi(s+(k+1)\pi)$ is always positive. By \eqref{Eq 24hilu}, we summarize the integrals in \eqref{Eqohulh-fj-gde-3} for all the even integers $k=2k_{\mathbf{m}},\cdots,2k_{\mathbf{n}}$ and obtain that $\mathbf{I}_1$ is negative and
\begin{equation}\label{Eqohulh-fj-gde-8}
\begin{split}
|\mathbf{I}_1|>\xi_3\sum_{k=2k_{\mathbf{m}}}^{2k_{\mathbf{n}}+1}(-1)^k\chi(k\pi)>\xi_3\frac{e^{\mathbf{r}\cos\eta}}{\mathbf{r}\cos\eta}\frac{\chi(2k_{\mathbf{n}}\pi)}{\chi(\mathbf{r}\sin|\eta|)}\left[1-\frac{\chi((2k_{\mathbf{n}}+1)\pi)}{\chi(2k_{\mathbf{n}}\pi)}\right].
\end{split}
\end{equation}
Together with the estimate in \eqref{Eqohulh-fj-gde-2} and a similar estimate for $\chi(2k_{\mathbf{m}}\pi)/\chi(\mathbf{r}\sin|\eta|)$ as the one in \eqref{Eqohulh-fj-gde-1}, we see from \eqref{Eqohulh-fj-gde-8} that there is a constant $\xi_4=\xi_4(\tau)>0$ dependent on $\tau$ such that
\begin{equation}\label{Eqohulh-fj-gde-8-9}
\begin{split}
|\mathbf{I}_1|>\xi_3\xi_4\frac{e^{\mathbf{r}\cos\eta}}{\mathbf{r}\cos\eta}.
\end{split}
\end{equation}
Since $\sin \delta-\cos\eta\leq -\sin^2\tau$ when $\tau$ is small, by the estimate in \eqref{E qtwxaa} we have $|\mathbf{H}_{3}|\geq (1-\varepsilon)|\mathbf{I}_1|$. Thus the inequality in \eqref{Eq 3gouu} follows by the relations $|\mathbf{H}|\geq |\mathbf{H}_1|\geq (1-\varepsilon)|\mathbf{H}_{3}|$.

When $\lambda\in[5\pi/4,7\pi/4]$, we also obtain the estimate for $\Lambda_{k,3}$ in \eqref{Eqohulh-fj-gde-7}, which is now positive. Thus, we also have similar inequalities as those in \eqref{Eqohulh-fj-gde-8} and \eqref{Eqohulh-fj-gde-8-9}.

When $\lambda\in[3\pi/4,5\pi/4]$ or $\lambda\in[7\pi/4,2\pi]\cup[0,\pi/4]$, we estimate $\mathbf{I}_2$ in \eqref{j;aljf;a} and derive a similar equation as the one in \eqref{Eqohulh-fj-gde-3} but with $\cos s$ there being replaced by $\sin s$ and $\Lambda_{k,3}$ there by
\begin{equation}\label{Eqohulh-fj-gde-5gik}
\begin{split}
\Lambda_{k,4}=2\cos\lambda+\frac{2}{(2k+1)\pi}\left[(2\lambda+\pi)\cos\lambda-2\sin\lambda\right]+O\left(\frac{k+1}{\mathbf{r}}\right).
\end{split}
\end{equation}
When $\lambda\in[3\pi/4,5\pi/4]$ or $\lambda\in[7\pi/4,2\pi]\cup[0,\pi/4]$, $\Lambda_{k,4}$ is always negative or positive. The estimate in \eqref{Eqohulh-fj-gde-5gik} implies similar inequalities for $\mathbf{I}_2$ as those in \eqref{Eqohulh-fj-gde-8} and \eqref{Eqohulh-fj-gde-8-9}. By the estimate in \eqref{E qtwxaa} we have $|\mathbf{H}_{4}|\geq (1-\varepsilon)|\mathbf{I}_2|$. Thus the inequality in \eqref{Eq 3gouu} follows by the relations $|\mathbf{H}|\geq |\mathbf{H}_2|\geq (1-\varepsilon)|\mathbf{H}_{4}|$. We omit those details. This completes the proof.

\end{proof}

At the end of this section, we look at the solution in \eqref{Eq 4} by choosing a different path of integration. We may connect the two points $z_0=x_0$ and $z_1=x_0+i(2\mathbf{k}\pi+\pi)$ by a vertical line segment $\mathcal{V}_1$ and the two points $z_1=x_0+i(2\mathbf{k}\pi+\pi)$ and $z_2=x+i(2\mathbf{k}\pi+\pi)$ by a horizontal line segment $\mathcal{L}_1$ and then connect the two points $z_2=x+i(2\mathbf{k}\pi+\pi)$ and $z=x+iy$ by a vertical line segment $\mathcal{V}_2$. Let $\mathcal{L}$ be the joint of $\mathcal{V}_1$, $\mathcal{L}_1$ and $\mathcal{V}_2$. Then, by the Cauchy integral theorem, along the curve $\mathcal{L}$, we have
\begin{equation}\label{Eq 76fiytarjoe-fu}
\begin{split}
f(z)=e^{e^z}\left[c+i\int_{0}^{2\mathbf{k}\pi+\pi}e^{-e^{x_0}e^{it}}dt+\int_{x_0}^{x}e^{e^{s}}ds+i(\mathbf{H}_5+i\mathbf{H}_6)\right],
\end{split}
\end{equation}
where
\begin{equation*}%\label{Eq 76fiyt-1anilre-1fu-2}
\begin{split}
\mathbf{H}_5&=\int_{2\mathbf{k}\pi+\pi}^{2\mathbf{k}\pi+\pi+\eta}e^{-\mathbf{r}\cos t}\cos\left(-\mathbf{r}\sin t\right)dt=\int_{0}^{\eta}e^{\mathbf{r}\cos t}\cos\left(\mathbf{r}\sin t\right)dt,\\
\mathbf{H}_6&=\int_{2\mathbf{k}\pi+\pi}^{2\mathbf{k}\pi+\pi+\eta}e^{-\mathbf{r}\cos t}\sin\left(-\mathbf{r}\sin t\right)dt=\int_{0}^{\eta}e^{\mathbf{r}\cos t}\sin\left(\mathbf{r}\sin t\right)dt.
\end{split}
\end{equation*}
We may also prove Theorem~\ref{maintheorem1} and Theorem~\ref{maintheorem2} for the simplest case of $h(z)$ by estimating the terms in \eqref{Eq 76fiytarjoe-fu}. Denote $\mathbf{r}_0=e^{x_0}$ for simplicity. By comparing the expression in \eqref{Eq 76fiytarjoe-fu} with the one in \eqref{Eq 70-cbuy-0} together with the two identities in \eqref{Eq 70-cor4}, we find
\begin{equation}\label{Eq 5}
\begin{split}
\int_{0}^{\pi}e^{-\mathbf{r}\cos t}\cos(\mathbf{r}\sin t)dt&=\int_{0}^{\pi}e^{-\mathbf{r}_0\cos t}\cos(\mathbf{r}_0\sin t)dt),\\
\int_{0}^{\pi}e^{-\mathbf{r}\cos t}\sin(\mathbf{r}\sin t)dt&=\int_{x_0}^{x}\left(e^{e^s}-e^{-e^s}\right)ds+\int_{0}^{\pi}e^{-\mathbf{r}_0\cos t}\sin(\mathbf{r}_0\sin t)dt.
\end{split}
\end{equation}
Note that $\int_{-\infty}^{x}(e^{e^s}-e^{-e^s})ds=2\int_{0}^{\mathbf{r}}(\sinh t)t^{-1}dt$ converges. By the Cauchy integral theorem, we may let $x_0\to-\infty$ and thus $\mathbf{r}_0\to0$ in \eqref{Eq 5}.
%, giving
%\begin{equation}\label{Eq 5vkgy}
%\begin{split}
%\int_{0}^{\pi}e^{-\mathbf{r}\cos t}\cos(\mathbf{r}\sin t)dt&=\pi,\\
%\int_{0}^{\pi}e^{-\mathbf{r}\cos t}\sin(\mathbf{r}\sin t)dt&=2\int_{0}^{\mathbf{r}}\frac{\sinh t}{t}dt.
%\end{split}
%\end{equation}
%The first identity in \eqref{Eq 5vkgy} follows from the first identity in \eqref{Eq 70-cor4} directly. Note that the function $(\sinh z)z^{-1}$ has a removable singularity at the origin. The second identity in \eqref{Eq 5vkgy} also follows from the contour integral $\oint_{\mathcal{L}_2}(\sinh z)z^{-1}dz=0$, where the path of integration $\mathcal{L}_2$ is the joint of the line segment from $-\mathbf{r}$ to $\mathbf{r}$ and the half-circle from $\mathbf{r}$ to $-\mathbf{r}$ in the upper half-plane.

\section{Concluding remarks}\label{Concluding remarks}

In this paper, by proving Theorem~\ref{maintheorem1} and Theorem~\ref{maintheorem2} we provide lower bounds for $|f(z)|$ of the solution of equation \eqref{Eq 1} along certain rays. For a function $f(z)$ which is analytic outside the disc $D(0,\dot{r})$, we define the \emph{order} of growth $\sigma(f)$ of $f(z)$ as
\begin{equation*}
\begin{split}
\sigma(f)=\limsup_{|z|\to\infty}\frac{\log\log|f(z)|}{\log |z|}.
\end{split}
\end{equation*}
If $\sigma(f)=\infty$, then we define the \emph{hyper-order} $\varsigma(f)$ of $f(z)$ as
\begin{equation*}
\begin{split}
\varsigma(f)=\limsup_{r\to\infty}\frac{\log\log\log|f(z)|}{\log |z|}.
\end{split}
\end{equation*}
Now, if $f(z)$ is a meromorphic solution of equation \eqref{Eq 1}, then by Theorem~\ref{maintheorem1} we have $\varsigma(f)\geq \sigma(h)$. On the other hand, using the Wiman--Valiron theory (see~\cite{Hayman1974}), it can be shown that $\varsigma(f)\leq \sigma(h)$~\cite{Zhang2024}. Thus $\varsigma(f)=\sigma(h)$. With this fact, below we look at two problems in uniqueness theory of meromorphic functions and an equation of Hayman mentioned in the introduction, respectively.

First, equation \eqref{Eq 1} is related to a conjecture posed by Br\"{u}ck~\cite{Bruck1996}: \emph{Let $f(z)$ be an entire function which is not constant. If the hyper-order $\varsigma(f)<\infty$ and $\varsigma(f)\not\in \mathbb{N}$, and if $f(z)$ and $f'(z)$ share one value $a$ CM, then $f'(z)-a=c(f(z)-a)$ for some constant $c\in \mathbb{C}$.} Here we say that $f(z)$ and $f'(z)$ share the value $a$ CM if $f(z)-a$ and $f'(z)-a$ have the same zeros, counting multiplicities. Br\"{u}ck himself proved that his conjecture is ture if the shared value $a=0$ or if $f'(z)$ has relatively few zeros in the sense of Nevanlinna theory, i.e.~$N(r,1/f')=o(T(r,f))$, $r\to\infty$. Gundersun and Yang~\cite{GundersenYang1998} proved that Br\"{u}ck's conjecture holds when $f$ has finite order.

Under the assumptions of Br\"{u}ck's conjecture, we have $f'(z)-a=e^{\varphi(z)}(f(z)-a)$, where $\varphi(z)$ is an entire function. When $a\not=0$, if we let $g(z)=(f(z)-a)/a$, then $g(z)$ satisfies the first order differential equation
\begin{equation}\label{Eq diss1}
\begin{split}
g'(z)=e^{\varphi(z)}g(z)+1.
\end{split}
\end{equation}
By using the lemma on the logarithmic derivative and the first main theorem of Nevanlinna, we deduce from \eqref{Eq diss1} that
\begin{equation*}%\label{Eq diss2}
\begin{split}
T(r,e^{\varphi})=m(r,e^{\varphi})&\leq T(r,a/g)+m(r,g'/g)+O(1)\\
&\leq T(r,g)+O(\log rT(r,g))+O(1)\leq 2T(r,g),
\end{split}
\end{equation*}
where $r\to\infty$ outside a set of finite linear measure. By removing this exceptional set using Borel's lemma~\cite[Lemma~1.1.1]{Laine1993} and then applying the Carath\'{e}odory inequality (see~\cite[pp.~66-67]{YangYi2003}) to $e^{\varphi(z)}$, we may show that $\varphi(z)$ is of finite order. By Theorem~\ref{maintheorem1}, if $\varphi(z)$ is a nonconstant polynomial, then $f(z)$ must be of infinite order and the hyper-order of $f(z)$ is equal to the degree of $\varphi(z)$, contradicting with the assumptions of Br\"{u}ck's conjecture. Thus we have the following

\begin{theorem}\label{maintheorem31}
Let $f(z)$ be a nonconstant entire function such that the hyper-order $\varsigma(f)<\infty$ and $\varsigma(f)\not\in \mathbb{N}$. If $f(z)$ and $f'(z)$ share the value $a$ CM, then $f'(z)-a=e^{\varphi(z)}(f(z)-a)$, where $\varphi(z)$ is a constant or a transcendental entire function of finite order.
\end{theorem}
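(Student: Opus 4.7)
The plan is to convert the CM-sharing condition into equation \eqref{Eq 1} and then combine the Nevanlinna estimate \eqref{Eq diss2} with Theorem \ref{maintheorem1} to pin down $\varphi$. Since $f$ is entire and $f,f'$ share $a$ CM, the quotient $(f'(z)-a)/(f(z)-a)$ is an entire function without zeros, hence equals $e^{\varphi(z)}$ for some entire $\varphi$. Setting $g(z):=(f(z)-a)/a$ then yields
\[
g'(z)=e^{\varphi(z)}g(z)+1,
\]
which is exactly \eqref{Eq 1} with $S\equiv 1$ and $P=\varphi$.

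Next I would isolate $e^\varphi=g'/g-1/g$ and apply the lemma on the logarithmic derivative together with the first main theorem, producing \eqref{Eq diss2} outside an exceptional set of finite linear measure; Borel's lemma then removes this set. The Carath\'{e}odory inequality identifies $\varsigma(e^\varphi)$ with $\sigma(\varphi)$, so \eqref{Eq diss2} yields $\sigma(\varphi)=\varsigma(e^\varphi)\le\varsigma(g)=\varsigma(f)<\infty$. To exclude a nonconstant polynomial $\varphi$, I observe that if $\deg\varphi=n\ge 1$ then the equation has the form covered by Theorem \ref{maintheorem1}; the pointwise lower bound \eqref{Eq 3} combined with the Wiman--Valiron upper bound $\varsigma(g)\le\sigma(e^\varphi)=n$ noted before the statement forces $\varsigma(g)=n\in\N$, contradicting $\varsigma(f)\notin\N$. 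Hence $\varphi$ is either constant or transcendental entire of finite order, which proves the first assertion.

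For the ``further'' statement, the hypothesis $m(r,1/(f-a))=O(r^l)$ translates to $m(r,1/g)=O(r^l)$, so retaining $m(r,1/g)$ instead of passing through $T(r,1/g)=T(r,g)+O(1)$ in the derivation of \eqref{Eq diss2} sharpens it to
\[
T(r,e^\varphi)\le m(r,g'/g)+m(r,1/g)+O(1)=O(\log r+\log T(r,g))+O(r^l)
\]
outside an exceptional set. Since $\varsigma(g)=\varsigma(f)<\infty$ by hypothesis, $\log T(r,g)=O(r^{\varsigma(f)+\varepsilon})$ is polynomially bounded, so after removing the exceptional set one obtains $T(r,e^\varphi)=O(r^{l_2})$ for some $l_2>0$. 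Since $\sigma(e^\varphi)<\infty$ precisely when $\varphi$ is a polynomial, this forces $\varphi$ to be a polynomial, and combined with the first part only the constant case remains.

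The hardest step is the polynomial upgrade in the ``further'' part: one must interpret the log-derivative bound $O(\log rT(r,g))$ as $O(\log r+\log T(r,g))$ rather than $O(\log r\cdot T(r,g))$ and then exploit the finiteness of $\varsigma(f)$ to render $\log T(r,g)$ polynomial in $r$. The removal of the exceptional set from the log-derivative estimate via Borel's lemma, so that the polynomial bound for $T(r,e^\varphi)$ holds along a full sequence $r\to\infty$, is routine but needs to be executed carefully.
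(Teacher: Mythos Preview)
Your proposal is correct and follows essentially the same route as the paper: reduce to equation \eqref{Eq diss1}, bound $T(r,e^\varphi)$ via the logarithmic-derivative lemma as in \eqref{Eq diss2}, use the Carath\'{e}odory inequality to get $\sigma(\varphi)<\infty$, invoke Theorem~\ref{maintheorem1} (together with the Wiman--Valiron upper bound) to rule out nonconstant polynomial $\varphi$, and for the further claim sharpen \eqref{Eq diss2} by retaining $m(r,1/g)$ and using $\varsigma(f)<\infty$ to make $\log T(r,g)$ polynomially bounded. Your explicit handling of the $m(r,1/g)$ term and the reading of $O(\log rT(r,g))$ as $O(\log r+\log T(r,g))$ is exactly what the paper's terse sentence ``the inequalities in \eqref{Eq diss2} imply that $T(r,e^{\varphi})=O(r^{l_2})$'' is relying on.
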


Second, equation \eqref{Eq 1} can be a particular reduction of a second order differential equation of Hayman~\cite{Hayman1996}, i.e.,
\begin{equation}\label{HAYMANEQ1}
f''(z)f(z)-f'(z)^2+\alpha_1(z)f'(z)f(z)+\alpha_2(z)f(z)^2=\beta_1(z)f(z)+\beta_2(z)f'(z)+\beta_3(z),
\end{equation}
where the coefficients $\alpha_1(z),\alpha_2(z),\beta_1(z),\beta_2(z),\beta_3(z)$ are rational functions. The \emph{transcendental} meromorphic solutions $f(z)$ of \eqref{HAYMANEQ1} satisfy a list of first order or second order linear differential equations~\cite{Zhang2024}. In particular, when $\alpha_1(z)$ and $\alpha_2(z)$ both vanish identically, all transcendental meromorphic solutions of equation~\eqref{HAYMANEQ1} are exponential type functions~\cite{ChiangHalburd2003,HalburdWang}. The order of growth of $f(z)$ can be determined explicitly, except those satisfying the first order differential equation
\begin{equation}\label{Eq 3 a0}
\begin{split}
f'(z)=h(z)f(z)+\gamma_1(z),
\end{split}
\end{equation}
where $\gamma_1(z)$ is a rational function and $h(z)$ is a meromorphic solution of the first order differential equation
\begin{equation}\label{Eq 3 a1}
\begin{split}
h'(z)=\gamma_2(z)h(z)+\gamma_3(z),
\end{split}
\end{equation}
where $\gamma_2(z),\gamma_3(z)$ are rational functions~\cite{zhang2017,Zhang2024}. We may suppose without loss of generality that $\gamma_1(z)$ is identically equal to $0$ or $1$. If $h(z)$ is a rational function, then $\sigma(f)$ is an integer; if $h(z)$ is transcendental and $\gamma_1(z)\equiv0$, then $\varsigma(f)=\sigma(h)$ is an integer~\cite{Zhang2024}. Now, if $h(z)$ is transcendental, $\gamma_1(z)\equiv1$ and $\gamma_3(z)\equiv0$, then by Theorem~\ref{maintheorem1} we have $\varsigma(f)=\sigma(h)$. It is not known if $\varsigma(f)=\sigma(h)$ when $\gamma_1(z)\equiv1$ and $\gamma_3(z)\not\equiv0$.

Here we consider the autonomous case of equations \eqref{Eq 3 a0} and \eqref{Eq 3 a1}, i.e., $f'(z)=h(z)f(z)+c_3$ and $h'(z)=c_1h(z)+c_2$ with three constants $c_1,c_2,c_3$. If $c_1=0$, then $h(z)$ is a constant or a linear polynomial and it follows that $f(z)$ has order $1$ or $2$. If $c_1\not=0$, we may write $h(z)=c_4e^{c_1z}-c_2/c_1$ for a constant $c_4$ and thus $f'(z)=(c_4e^{c_1z}-c_2/c_1)f(z)+c_3$. If $c_4=0$, then $\sigma(f)=1$; if $c_4\not=0$ and $c_3=0$, then $\varsigma(f)=1$. When $c_3c_4\not=0$, by suitable translation and re-scaling, we may suppose that $c_3=c_4=1$, i.e.
\begin{equation}\label{Eq 3 a2}
\begin{split}
f'(z)=(e^{z}-c_5)f(z)+1,
\end{split}
\end{equation}
where $c_5$ is a constant. If $c_5=0$, then by Theorem~\ref{maintheorem1} we have $\varsigma(f)=1$. Below we consider the case when $c_5\not=0$. We write $c_5=a+ib$ with two real constants $a,b$ such that $|a|+|b|\not=0$.

If $c_5$ is a positive integer, say $c_5=p\geq 1$, then it is easy to check that equation \eqref{Eq 3 a2} has a special solution
\begin{equation*}%\label{Eq 70 fina-0}
\begin{split}
f_p(z)=-\sum_{k=1}^{p}\frac{(p-1)!}{(p-k)!}e^{-kz}.
\end{split}
\end{equation*}
Then all other solutions of equation \eqref{Eq 3 a2} take the form $f(z)=f_p(z)+g(z)$, where $g(z)$ is a nontrivial solution of the equation $g(z)'=(e^{z}-c_5)g(z)$. We see that $g(z)$ have hyper-order $\varsigma(g)=1$.

If $c_5$ is not a positive integer, we shall show that $\varsigma(f)=1$ by generalizing the idea in the introduction. For any real constant $\omega$, we consider the complex line integral
\begin{equation}\label{Eq 70 fina-3}
\begin{split}
\oint_{|z|=1}e^{-\omega z}z^{a+ib-1}dz=:C_{\omega}.
\end{split}
\end{equation}
In general, $C_{\omega}$ is dependent on $\omega$. In particular, when $a+ib$ is a negative integer, say $a+ib=-q$, it is elementary that $C_{\omega}= i2\pi(-\omega)^q/(q!)$.

Along the curve $L$ defined in the introduction, by the Cauchy integral theorem, we may write the solution of the equation $f'(z)=(e^{z}-a-ib)f(z)+1$ as
\begin{equation}\label{Eq 70 fina-1}
\begin{split}
f(z)=e^{e^z-(a+ib)z}\left[c+\int_{0}^{x}e^{-e^{t}+at+ibt}dt+i\mathbf{H}\right],
\end{split}
\end{equation}
where $c$ is the integration constant and
\begin{equation}\label{Eq 70 fina-2}
\begin{split}
\mathbf{H}=e^{(a+ib)x}\int_{0}^{y}e^{-e^x\cos t-ie^x\sin t}e^{(a+ib)it}dt.
\end{split}
\end{equation}
By the same arguments as in the derivation in \eqref{Eq 4nalr-1}, we may show that
\begin{equation}\label{Eq 70 fina-3-hy}
\begin{split}
\int_{0}^{x}e^{-e^{t}+at}e^{ibt}dt=\int_{0}^{\infty}e^{-e^{t}+at}e^{ibt}dt-\int_{e^x}^{\infty}e^{-s}s^{a-1}s^{ib}ds=\tilde{d}_0+O\left(e^{ax}e^{-e^x}\right),
\end{split}
\end{equation}
where $\tilde{d}_0$ is a constant. If $c+\tilde{d}_0\not=0$, then we see that $\varsigma(f)=1$ by choosing $y=0$. If $c+\tilde{d}_0=0$, we let $\omega=e^x$. We choose $y=2\mathbf{k}\pi$ for a positive integer $\mathbf{k}$. Then we compare the integrals in \eqref{Eq 70 fina-2} and \eqref{Eq 70 fina-3} and find that
\begin{equation}\label{Eq 70 fina-4-bef}
\begin{split}
i\mathbf{H}=e^{(a+ib)x}C_{\omega}\mathbf{k}=i\frac{2\pi(-1)^q}{q!}\mathbf{k}
\end{split}
\end{equation}
when $a+ib=-q$ is a negative integer, and
\begin{equation}\label{Eq 70 fina-4}
\begin{split}
i\mathbf{H}=e^{(a+ib)x}C_{\omega}\sum_{k=1}^{\mathbf{k}}e^{i 2(k-1)\pi(a+ib)}=e^{(a+ib)x}C_{\omega}\frac{e^{i2\mathbf{k}\pi(a+ib)}-1}{e^{i 2\pi(a+ib)}-1}
\end{split}
\end{equation}
when $a+ib$ is not a nonzero integer. In the latter case, we may choose the integer $\mathbf{k}$ so that $\mathbf{k}(a+ib)$ is not an integer. When $\omega=e^x$ is large, we choose an integer $k$ so that $k+a>0$ and apply the integration by parts $k$ times and find
\begin{equation}\label{Eq 70 fina-6}
\begin{split}
C_{\omega}=\left[\frac{e^{-\omega z}z^{a+ib}}{a+ib}+\cdots+\frac{\omega^ke^{-\omega z}z^{a+k+ib}}{(a+ib)\cdots(a+k+ib)}\right]_{e^{i0}}^{e^{2i\pi}}+W(z),
\end{split}
\end{equation}
where
\begin{equation}\label{Eq 70 fina-6-hg}
\begin{split}
W(z)=\frac{\omega^{k+1}}{{(a+ib)(a+1+ib)\cdots(a+k+ib)}}\oint_{|z|=1}e^{-\omega z}z^{a+k+ib}dz.
\end{split}
\end{equation}
For the integral in \eqref{Eq 70 fina-6-hg}, by the Cauchy integral theorem, it is easy to show that
\begin{equation}\label{Eq 70 fina-7}
\begin{split}
\oint_{|z|=1}e^{-\omega z}z^{a+k+ib}dz&=\left[1-e^{i2\pi(a+k+ib)}\right]\int_{0}^{1}e^{-\omega t}t^{a+k+ib}dt.
\end{split}
\end{equation}
Thus we have
\begin{equation}\label{Eq 70 fina-8}
\begin{split}
\oint_{|z|=1}e^{-\omega z}z^{a+k+ib}dz&=\frac{1-e^{i2\pi(a+k+ib)}}{\omega^{a+k+1+ib}}\left[\int_{0}^{\infty}e^{-s}s^{a+k+ib}ds-\int_{\omega}^{\infty}e^{-s}s^{a+k+ib}ds\right]\\
&=\frac{1-e^{i2\pi(k+a+ib)}}{\omega^{a+k+1+ib}}\left[\Gamma(a+k+1+ib)-\int_{\omega}^{\infty}e^{-s}s^{a+k+ib}ds\right],
\end{split}
\end{equation}
where $\Gamma(z)$ represents the \emph{gamma function of Euler}; see \cite[Chapter~15]{GreeneKrantz2006}. The gamma function of Euler is meromorphic and never vanishes in the plane and, in particular, is analytic in the right half-plane. Then, by the same arguments as in the derivation in \eqref{Eq 4nalr-1}, we obtain from \eqref{Eq 70 fina-6}-\eqref{Eq 70 fina-8} that
\begin{equation}\label{Eq 70 fina-9}
\begin{split}
C_{\omega}=\frac{\Gamma(a+k+1+ib)}{{(a+ib)(a+1+ib)\cdots(a+k+ib)}}\frac{1-e^{i2\pi(a+ib)}}{e^{(a+ib)x}}+O\left(e^{(a+k+1)x}e^{-e^x}\right).
\end{split}
\end{equation}
By \eqref{Eq 70 fina-1}-\eqref{Eq 70 fina-4} and \eqref{Eq 70 fina-9}, we see that, for each integration constant $c$, the solution in \eqref{Eq 70 fina-1} satisfies $|f(z)|\geq \xi e^{e^x}e^{-ax+b2\mathbf{k}\pi}$ for all large $x$ and some constant $\xi$ dependent on $a+ib$ and $\mathbf{k}$. This yields $\varsigma(f)=1$, provided that $a+ib$ is not a positive integer. Together with \cite[Theorem~2.1]{Zhang2024}, this gives a complete description on the order or hyper-order for transcendental entire solutions $f(z)$ of equation \eqref{HAYMANEQ1} in the autonomous case: \emph{If $f(z)$ has finite order, then $\sigma(f)=1$ or $\sigma(f)=2$ and, if $f(z)$ has infinite order, then $\varsigma(f)=1$}.

Finally, we give some more comments on equation \eqref{Eq 3 a2}. If $c_5=0$, we have equation \eqref{Eq 1} and the solution $f(z)$ in \eqref{Eq 4} takes the form
\begin{equation*}
\begin{split}
f(z)=e^{e^x\cos y}e^{ie^x\sin y}\left[c+\int_{x_0}^xe^{-e^t}dt+i\mathbf{H}\right],
\end{split}
\end{equation*}
where $\mathbf{H}$ is defined in \eqref{Eq 70-cbuy}. Recall that $\int_{x_0}^xe^{-e^t}dt=d_0+O\left(xe^{-e^x}\right)$ for a constant $d_0$. Along the line $\Upsilon_{\mathbf{k},1}$ on which $z=x+i(2\mathbf{k}\pi+\zeta)$ such that $\zeta\in[-\pi/2,\pi/2]$, the calculations in the introduction actually shows that $|\mathbf{H}-2\mathbf{k}\pi|=|\mathbf{I}_{12}+i\mathbf{I}_{22}|\leq 2|\zeta|e^{-e^x\cos\zeta}$. We may choose the integer $\mathbf{k}$ so that $c+d_0+i2\mathbf{k}\pi$ is nonzero and thus $|f(z)|\geq (1-\varepsilon)|c+d_0+i2\mathbf{k}\pi|e^{e^x\cos\zeta}$. On the other hand, along the line $\Upsilon_{\mathbf{k},2}$ on which $z=x+i(2\mathbf{k}\pi+\zeta)$ such that $\zeta\in[\pi/2+\varepsilon,3\pi/2-\varepsilon]$, by Theorem~\ref{maintheorem2} we have $|f(z)|\geq \xi/2e^{x}\cos(\zeta-\pi)$. This implies that all zeros, except finitely many, of the solution $f(z)$ in \eqref{Eq 4} in the right half-plane must be around the lines $\Upsilon_{k,3}$ on which $z=x+i(2k+1)\pi/2$, $k=0,\pm 1,\pm2,\cdots$. It is an easy work to extend this result to the left half-plane. If $c_5\not=0$, then by equation \eqref{Eq 70 fina-3}, we have the two identities
\begin{equation*}%\label{Eq 70 fina-5}
\begin{split}
\int_{0}^{2\pi}e^{-\omega\cos\theta-b\theta}\cos(-\omega\sin\theta+a\theta)d\theta&=C_{\omega,1},\\
\int_{0}^{2\pi}e^{-\omega\cos\theta-b\theta}\sin(-\omega\sin\theta+a\theta)d\theta&=C_{\omega,2},
\end{split}
\end{equation*}
where $C_{\omega,1}$ and $C_{\omega,2}$ are two real constants. This generalizes the two identities in \eqref{Eq 70-cor4}. With these two identities, we may estimate the modulus $|f(z)|$ of the solution $f(z)$ of equation \eqref{Eq 3 a2} along lines parallel to the real axis as above.

Further, we take the derivatives on both sides of equation \eqref{Eq 3 a2} and obtain the second order differential equation $f''(z)-(e^{z}-c_5)f'(z)-e^{z}f(z)=0$. If we let $f(z)=w(z)\exp(\frac{1}{2}(e^z-c_5z))$, then $w(z)$ satisfies the second order differential equation
\begin{equation}\label{Eq 3 a3}
\begin{split}
w''(z)+\left[-\frac{1}{4}e^{2z}+\frac{1}{2}(c_5-1)e^z-\frac{1}{4}c_5^2\right]w(z)=0.
\end{split}
\end{equation}
The method by Bank and Langley~\cite{Banklangley1987} (see also~\cite[Chapter~5]{Laine1993}) can be used to detect whether $w(z)$ has relatively few zeros in the sense that the exponent of convergence of zeros of $w(z)$ satisfies $\lambda(w)<\infty$~\cite{zhang2021-1,Zhang2022}. These results have applications in describing the oscillating phenomenon of the general second order linear differential equation~\cite{Bergweilereremenko2017,Bergweilereremenko2019,Zhang2024-Construction1,Zhang2025-Construction2}. We note that equation \eqref{Eq 3 a3} has a zero-free solution $w(z)=\exp(\frac{1}{2}(e^z-c_5z))$. The other solutions $w(z)$ of \eqref{Eq 3 a3} such that $0<\lambda(w)<\infty$ can be determined explicitly; see~\cite{ChiangIsmail2006,Zhang2022}. By the expression of $f(z)$ in \eqref{Eq 70 fina-1}, the method presented previously allows to describe the modulus of solutions of equation \eqref{Eq 3 a3} more precisely.

%Finally, we note that the equation $f'(z)=h(z)f(z)+1$ can have special solutions which have integer order when $h(z)=e^{P(z)}-P'(z)$ with a polynomial $P(z)$. By looking at previous discussions, for other choice of the meromorphic function $h(z)$ satisfying \eqref{Eq 3 a1}, the equation $f'(z)=h(z)f(z)+1$ probably only has meromorphic solutions with integer hyper-order. We would like to give a complete description on the order or hyper-order of meromorphic solutions of the equation $f'(z)=h(z)f(z)+1$ in the near future.

%\section*{Acknowledgements}

%\bibliographystyle{elsarticle-num}
%\bibliography{mybibfile}

\end{document}